\newcolumntype{x}[1]{!{\centering\arraybackslash\vrule width #1}}
\newtheorem{theorem}{Theorem}
\newtheorem{lemma}{Lemma}
\newtheorem{remark}{Remark}
\newtheorem{corollary}{Corollary}
\newtheorem{definition}{Definition}
\newtheorem{conjecture}{Conjecture}
\newtheorem{proposition}{Proposition}
\newcommand{\R}{\mathbb{R}}
\newcommand{\N}{\mathbb{N}}
\definecolor{edgecolour}{RGB}{39,64,112}
\definecolor{dualvertexcolour_pentagon}{RGB}{205,79,57}
\definecolor{dualvertexcolour_hexagon}{RGB}{79,205,57}
\tikzstyle{pentagon}=[circle,draw,inner sep=0pt, fill=dualvertexcolour_pentagon, minimum width=1.5mm]
\tikzstyle{hexagon}=[circle,draw,inner sep=0pt, fill=dualvertexcolour_hexagon, minimum width=1.5mm]
\tikzstyle{facet}=[circle,draw,inner sep=0pt, minimum width=1.5mm]
\tikzstyle{edge}=[draw, color=edgecolour, line width=0.3mm]
\let\@fnsymbol\@arabic
\newcommand{\specificthanks}[1]{\@fnsymbol{#1}}
\begin{document}

\title{\textbf{Spectral clustering of combinatorial fullerene isomers based on their facet graph structure}}
\insert\footins{\footnotesize \noindent Artur Bille \\ \texttt{artur.bille@uni-ulm.de}\vspace*{0.2cm} \\  Victor Buchstaber\\ \texttt{buchstab@mi-ras.ru}\vspace*{0.2cm} \\Evgeny Spodarev\\ \texttt{evgeny.spodarev@uni-ulm.de}\vspace*{0.5cm}}

\author{Artur Bille\thanks{Ulm University} \textsuperscript{,}\thanks{Skoltech}\and 
Victor Buchstaber\textsuperscript{\specificthanks{2},}\thanks{Steklov Mathematical Institute}\and 
Evgeny Spodarev\textsuperscript{\specificthanks{1}}
}
\date{}
\maketitle

\epigraph{... This spiritual experience, this discovery of what Nature has in store for us with carbon, is still ongoing.}{Richard E. Smalley, {\it Discovering the fullerenes}, Nobel lecture, Dec. 7, 1996.}

\begin{abstract}
\label{abstract}
After Curl, Kroto and Smalley were awarded 1996 the Nobel Prize in chemistry, fullerenes have been subject of much research. One part of that research is the prediction of a fullerene's stability using topological descriptors. It was mainly done by considering the distribution of the twelve pentagonal facets on its surface, calculations mostly were performed on all isomers of $C_{40}, C_{60}$ and $C_{80}$.
This paper suggests a novel method for the classification of combinatorial fullerene isomers using spectral graph theory. The classification presupposes an invariant scheme for the facets based on the Schlegel diagram. The main idea is to find clusters of isomers by analyzing their graph structure of hexagonal facets only. We also show that 
our classification scheme can serve as a formal stability criterion, which became evident from a comparison of our results with recent quantum chemical calculations \cite{Grimme17}.
We apply our method to classify all isomers of $C_{60}$ and give an example of two different cospectral isomers of $C_{44}$.\\
Calculations are done with MATLAB. The only input for our algorithm is the vector of positions of pentagons in the facet spiral. These vectors and Schlegel diagrams are generated with the software package Fullerene \cite{PeSchwertFull}.
\end{abstract}
\textbf{Keywords:} convex polytope, fullerene, combinatorial isomer, facet spectrum, $C_{60}$, dual graph, eigenvalue, adjacency matrix

\noindent
\textbf{MSC2010:}  \textbf{Primary:} 52B12;  \textbf{Secondary:} 05C10,  05C90,  92E10

\section{Introduction}
\label{sec: Introduction}
In this paper, we consider a fullerene $C_n$ as a convex polytope modeling a closed three-dimensional carbon-cage with $n$ atoms, cf. \cite{AndKarSkr16}. Each vertex is connected to exactly three other vertices, such that the facets are pentagons and hexagons only. Using the classical Euler relation and Eberhard's theorem \cite[\S 13.3]{Grunbaum} for simple three-dimensional convex polytopes one can conclude that the number of pentagonal facets is always equal to $12$ and $n$ is even. In this case, the number of hexagons is $m_6=\frac{n}{2}-10$, cf. \cite{FowlerManop}. Theoretically, $C_{n}$ exists for $n=20$ and all even $n\geq 24$, see \cite{AndKarSkr16}. In the sequel we call such a $n$ \textit{feasible}.  However, up to now 

\begin{figure}[H]    								
\centering 	
\subfigure{									
\includegraphics[scale=0.3]{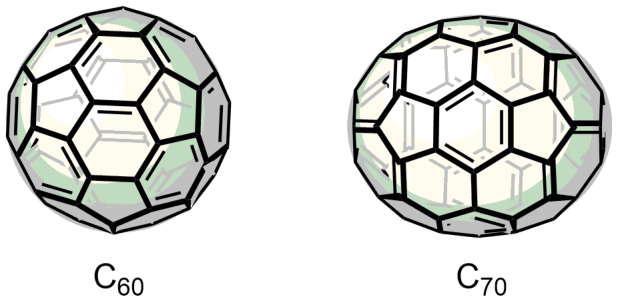}
}
\caption{IPR-Isomers of $C_{60}$ (Buckminster fullerene) and $C_{70}$. Courtesy of Max von Delius}
\label{fig:IPR}
\end{figure}

only a few of them (some isomers of $C_{n}$ with $n=60,70,76,78,80,82,84$) are separated (i.e., are chemically stable and can be synthesized in considerable mass quantities), cf. \cite{ZieglerAmsharov11,Mojicaetal13,MarPao14,Hirsch_Bettreich_Wudl}. Another difficult problem is the combinatorial constructive  enumeration of all isomers of $C_n$, see \cite{BriDre,Thurs98,BuchEro17} and references in \cite[Section 3]{Erokh18}. For instance, a set of four operations (including that of Endo-Kroto) enables the construction of all fullerenes for each even $n$ starting with $n=24$ (barrel), see \cite{BuchstEro,Erokh18}. Other examples for such operations are the {\it generalized Stone-Wales operation} (cf. \cite{generalized_SW}, which we investigate in a forthcoming paper \cite{BBS_2} in more detail) and the {\it buckygen} (introduced 2012 in \cite{buckygen})  which is up to now the fastest algorithm to generate all $C_n$-isomers.

Combinatorial isomer is a class of combinatorially equivalent polytopes. Two polytopes $P$ and $P'$ are called \textit{combinatorially equivalent} if there exists a one-to-one mapping between the lattice of all facets of $P$ and $P'$ that is inclusion-preserving \cite[p. 38]{Grunbaum}. Hence, this equivalence class is determined by the graph of vertices, or equivalently, by the dual graph of facets. Both are uniquely described by their adjacency matrices. With increasing amount of atoms $n$, the number of isomers ISO$\left(C_n\right)$ grows fast as  $O(n^9)$, cf. \cite{Thurs98}. For instance, there exist a unique $C_{20}$--isomer (we write: $|C_{20}|=1$) which is {\it dodecahedron}, a Platonic solid, but $C_{60}$ has already $1812$ combinatorial isomers, see \cite{HouseofGraphs}. Among all isomers of $C_{60}$, only one (the so--called {\it Buckminster} fullerene, an Archimedean truncated icosahedron) has all pentagonal facets being not adjacent. Such isomers are called {\it IPR-fullerenes} (from {\it Isolated Pentagon Rule}), cf. Figure \ref{fig:IPR}. As illustrated in Figure \ref{fig:number_isomere}, the number ISO-IPR$\left(C_n\right)$ of IPR-isomers also grows asymptotically as $O(n^9)$  with increasing number of atoms $n$ \cite{BriDre,Rukh18}. 

The huge variety of possible fullerene isomers  with large $n$ (even in the IPR--class) makes the problem of finding molecules with remarkable chemical and physical attributes (including thermodynamic and kinetic stability, permeability, electric conductivity, light diffraction, etc.) extremely difficult. Hence, a need for fast and computationally cheap classification methods of isomers arises. In the literature, there already exist a number of functionals (called {\it chemical descriptors}) allowing to find a certain order of isomers, cf. \cite{HAYAT2018164,Baca,MT}. These descriptors are of topological, geometric or physical nature. For the practical separation of fullerenes,  their potential energetic level is of primary significance. The paper \cite{Grimme17}  computes the relative energies of all $1812$ isomers of $C_{60}$ using the density functional theory (DFT) \cite{DFT} and testing $26$ chemical descriptors for their correlation with the energetic ordering of these isomers. The authors identify $7$ rules among $26$ which they call  {\it good stability criteria}. These criteria are defined as those able to identify correctly the first two energetically most stable  and the three energetically least stable isomers in the correct energetic order such that the correlation coefficient between the ordering according to these rules and the energetic order is at least $0.6$. However, the DFT calculations are based on the approximative numerical solution of  electronic Schr\"odinger (linear partial differential) equations requiring from half an hour up to one day of calculation time per isomer ($n=60$) on a usual personal computer. Moreover, the convergence of the DFT numerical method is not guaranteed.    

Following the famous question by Mark Kac (1966) {\it Can one hear the shape of a drum?} \cite{Kac66} we try to ``hear'' the shape of a fullerene from the spectrum of its adjacency matrix provided that $C_n$--isomers can be mapped bijectively onto their spectra. We also give an example of two different $C_{44}$--isomers with the same spectrum of adjacency matrix of hexagonal facets.
We propose a new method of clustering and of classification of $C_n$--isomers for any $n\ge 24$, $n\neq 44$, based on combinatorial and graph theoretic structure of dual graphs $T^6_{n}$  of their hexagonal facets. For $n=60$ we show in this paper that it yields a good stability criterion with correlation coefficient of $0.9$.
The spectral analysis of graphs based on adjacency matrices of their vertices has a long standing tradition \cite{BroHae,BruCve,Bap}.
However, we show that for the complete classification and ordering of fullerenes it is sufficient to use
\begin{itemize}
\item  the dual graph $T^6_{n}$ of hexagons since the positions of 12 pentagons can be reconstructed out of cycles larger than triangles and degrees of vertices in $T^6_n$  (cf. \cite{BBS_2}).
\item Newton polynomials of the spectrum of the adjacency matrix $A^6_{n}$ of hexagons up to a certain degree $k^*$ since it is well known that they are numerically more stable than the eigenvalues themselves. These polynomials can be computed directly as a trace of $\left(A^6_{n}
\right)^k$, $k=2q$, $q=1,\ldots, k^*/2$, whereas the matrix multiplication is computationally less demanding than finding all eigenvalues of a matrix. Hereby, we use a graph theoretical interpretation of Newton polynomials of adjacency matrices of graphs in terms of their cycle numbers.   
\end{itemize}
Our  method is computationally very fast requiring $O(n^3\log n)$ operations with a total of  1.15s CPU time on a Intel Core i5-8300H (2.3 GHz) ($n=60$). The high correlation of the obtained ordering with the DFT energetic order allows to figure out few energetically stable isomers at a low computational cost.  For these isomer candidates, the detailed DFT analysis can be further performed. 

\begin{figure}[H]    								
\centering 	
\subfigure{									
\includegraphics[scale=0.3]{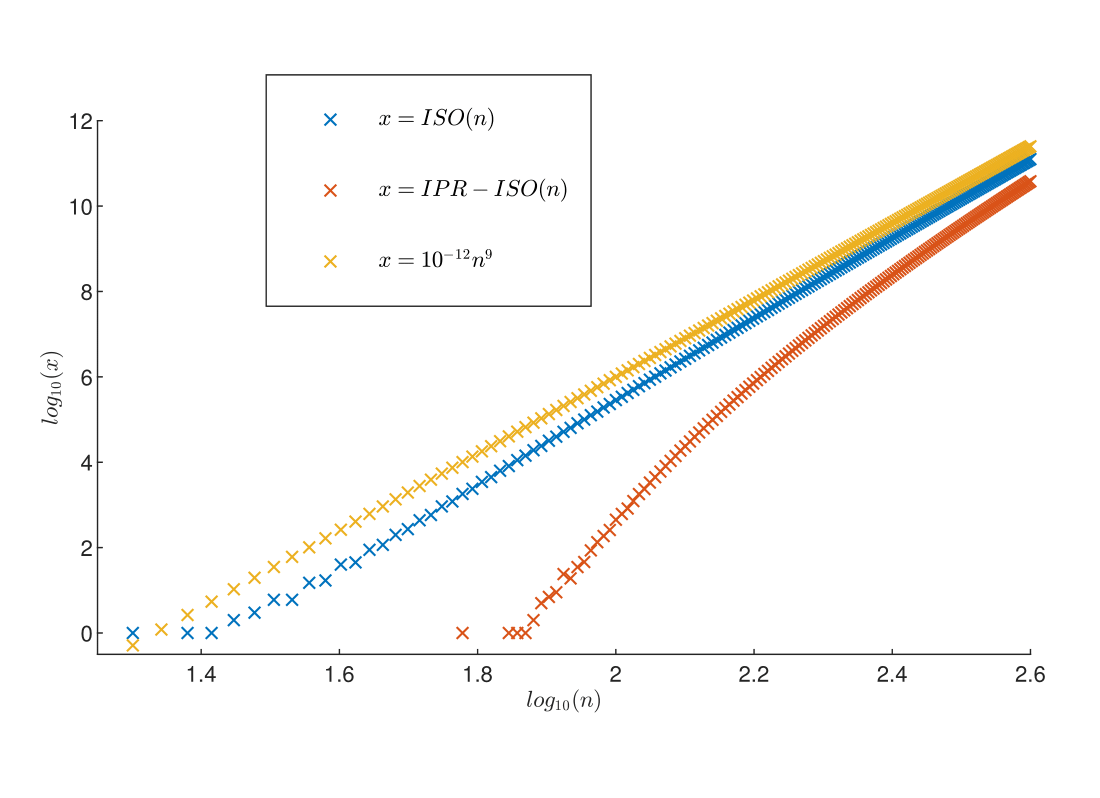}
}
\caption{Logarithm of the numbers of $C_n$-isomers $ISO(n)$, IPR-isomers $IPR-ISO(n)$ and their upper bound for all feasible $n\in\lbrace20,\ldots,400 \rbrace$, on a  logarithmic scale.}
\label{fig:number_isomere}
\end{figure}

%

In order to construct the facet adjacency matrices, a certain enumeration algorithm of all facets is required. Since the spectra of these matrices are invariant with respect to the enumeration of facets, the choice of this algorithm does not matter from the mathematical point of view. For all fullerenes with $24\le n < 380$, we use the {\it spiral rule} first introduced in \cite{ManMayDown91}  (where it is called an {\it orange peel} scheme) to enumerate all pentagons and hexagons and generate their adjacency matrices $A_{n}^5$ and $A_{n}^6$, respectively. The first fullerene not obeying the spiral rule is a $C_{380}$--isomer,  and the second counterexample is one of over 90 billion $C_{384}$-isomers. All other isomers of $C_n$ with $n\leq 450$ stick to this rule, cf. \cite{ManFow}. For fullerenes without a facet spiral, a generalized spiral \cite{WSA17} can be used for the one-to-one facet enumeration.


 Our spectral approach is illustrated on all isomers of $C_{60}$ which are the best studied fullerenes, especially the above mentioned famous {\it Buckminster} (soccer ball-like molecule). Such molecular structures are all allotropic forms of carbon \cite{MarPao14}. 

\section{Spectral analysis of  $\boldsymbol{C_{n}}$}
\label{sec:Spectral}
For a feasible $n$ a {\it fullerene isomer} $P\in C_n$ is a simple, compact and convex polytope in $\mathbb{R}^3$ with all $m:=n/2+2$ facets being either one of 12 pentagons or one of $n/2-10$ hexagons:
\begin{equation*}
P := \{ x \in \mathbb{R}^3 ~|~ a_i x+b_i \geq 0\; i=1,\ldots, m\}, \quad a_i\neq 0, \; b_i\in \R, \mbox{ for all }i .
\end{equation*}
 Its $i^\text{th}$ facet $f_i$ is given by
$
f_i := \{ x\in \mathbb{R}^3 ~|~ a_i x + b_i = 0\}\cap P$ , $i=1,\ldots , m.$
$P$ can be mapped on a two--dimensional graph in a way that edge crossing is avoided and vertex connectivity information is retained. First, one has to choose a facet and rotate $P$ so that this facet is located parallel to the $(x,y)$--plane at some distance below a fixed projection point $q$.
Next, one draws a line starting in $q$ to each vertex of the polyhedron and extends this line until it crosses the $(x,y)$--plane. The intersections are the vertices of the new two-dimensional graph, also called \textit{Schlegel diagram}. Although such a projection is not bijective, it yields a full combinatorial invariant of $P$. Each of $f_i$ can be chosen to be initially parallel to the $x-y$--plane. Depending on this choice, the resulting graphs can be very different, see \cite{FowlerManop}. In Figure \ref{fig:buckyball} and \ref{fig:buckyball_diff}, one can see two possible Schlegel diagrams for the Buckminster fullerene ($n=60$). The graphs  on these Schlegel diagrams are equivalent in the sense that they have the same vertex connectivity. From the definition of a fullerene it immediately follows that the corresponding planar graph is 3-regular. We denote a planar graph of a fullerene by $F$.

\begin{figure}[H]
\centering
\subfigure[a pentagon was chosen initially]{
\includegraphics[scale=0.15]{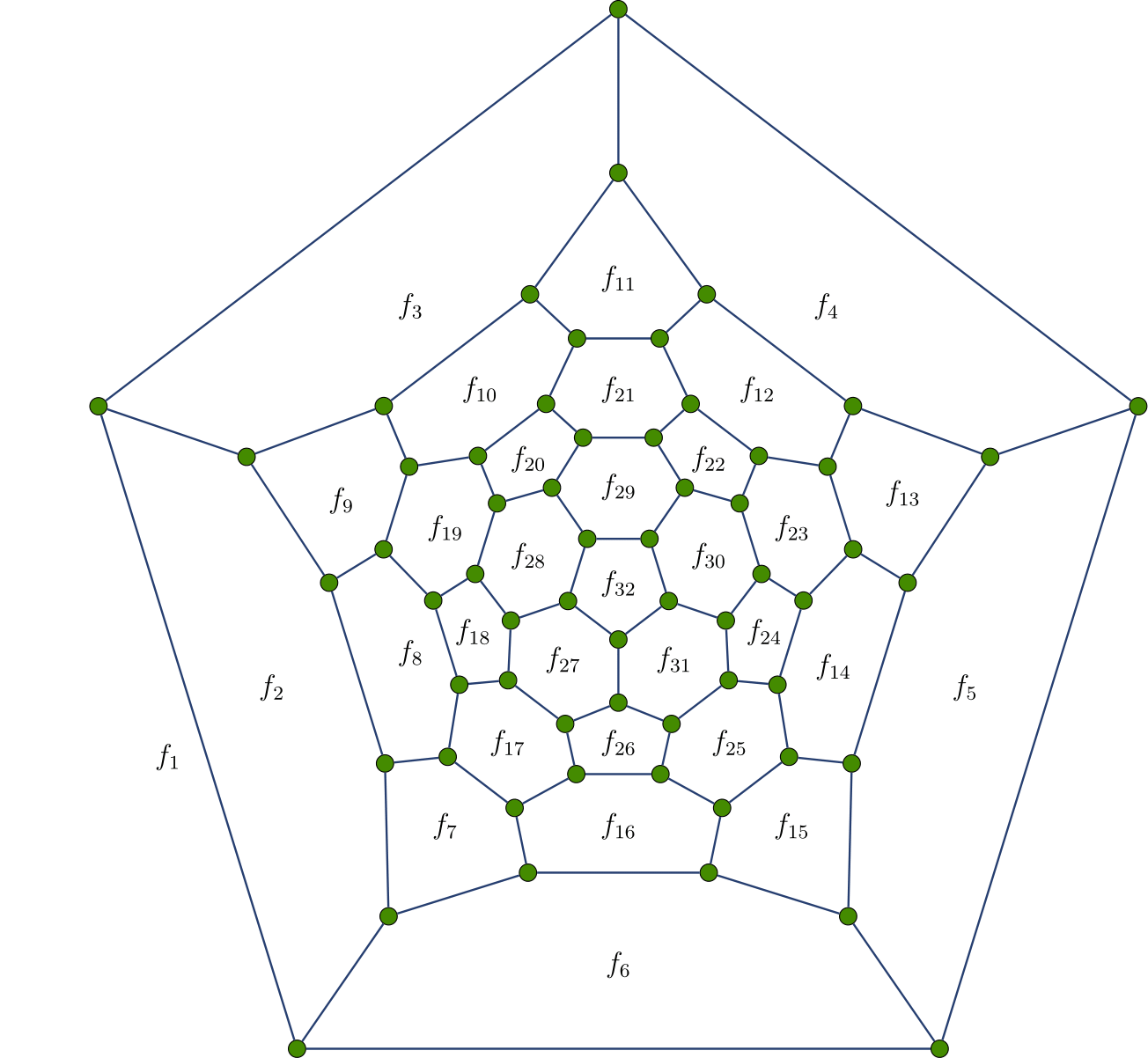}
\label{fig:buckyball}
} 
\subfigure[a hexagon was chosen initially]{
\includegraphics[scale=0.15]{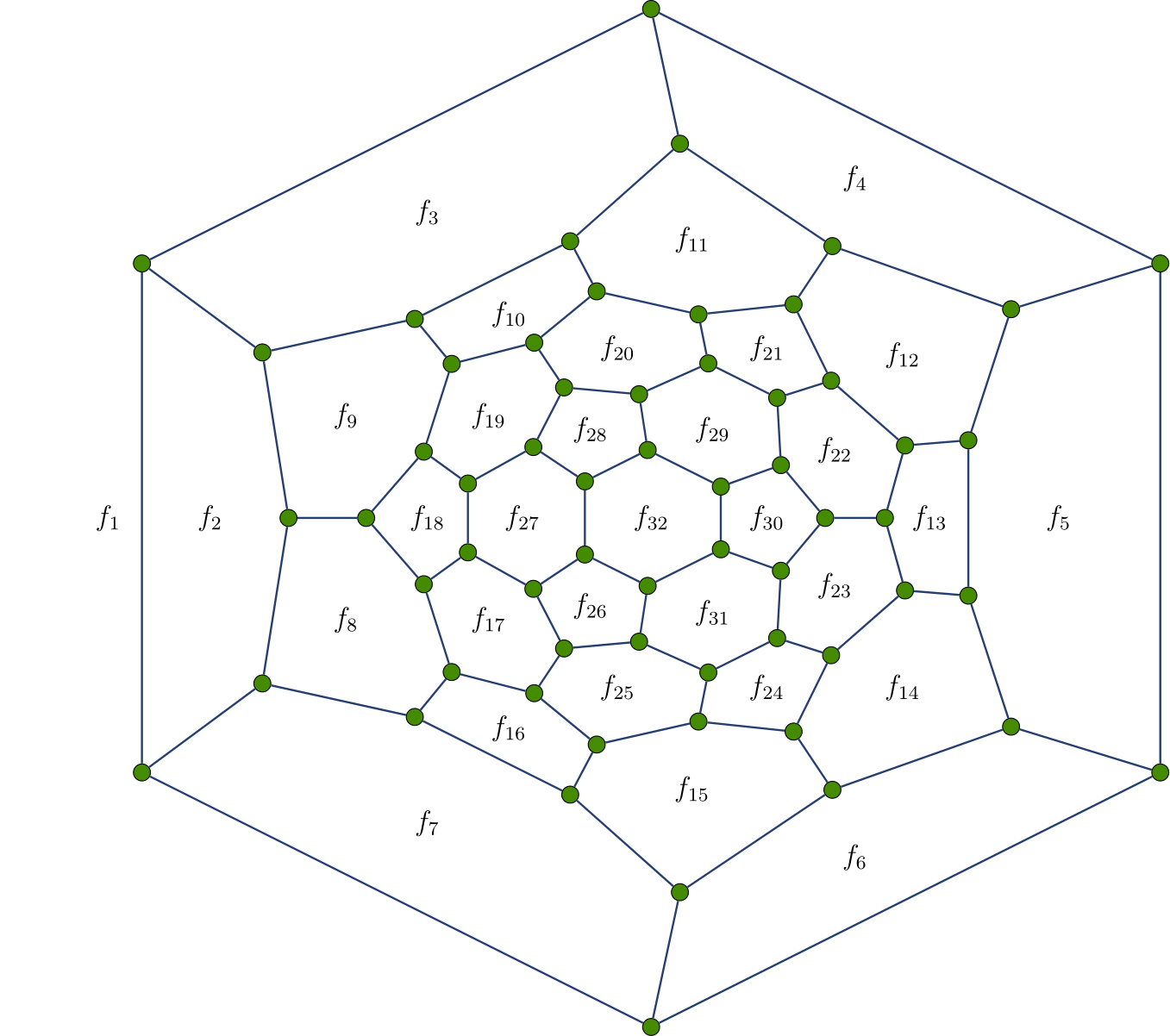}
\label{fig:buckyball_diff}
}
\caption{Two different, but combinatorially equivalent, Schlegel diagrams of Buckminster fullerene }
\label{fig:Schlegel}
\end{figure}

Assume that $C_n$ has a {\it  facet spiral } which is defined as an order of facets such that each facet shares an edge with the previous and next one. This spiral can be presented by a \textit{facet spiral sequence}. It is a sequence of twelve integers, which determines the position of the twelve pentagons in the facet spiral. Another representation is a sequence of fives and sixes such that the $k$--th number in the sequence indicates whether the $k$--th facet in the spiral is a pentagon or a hexagon \cite{AndKarSkr16}. We use the first approach in our software \cite{AB}. For instance, the facet spiral sequence for Buckminster fullerene represented by its Schlegel diagram in Figure  \ref{fig:buckyball} is 
$\left(1,7,9,11,13,15,18,20,22,24,26,32\right)$.  By $C_{n,i}$ we mean the $i^\text{th}$ $C_n$--isomer according to the lexicographical order of facet spiral sequences, see also \cite[Chapter 2]{FowlerManop}.


\subsection{Dual facet graphs, adjacency matrices  and their spectra}
\label{subsec:dual}

Let  $G=(V(G),E(G))=(V,E)$ be a finite undirected graph with vertex set $V$ and edge set $E$. Let $|V|=m$ be the number of vertices in $V$. The adjacency matrix $A_G=A=(a_{i,j})$ of $G$ is given by 
\begin{equation*} 
a_{i,j}:=
\begin{cases}
1, & \text{if}~ (i,j)\in E,\\
0, &\mathrm{otherwise,}\\
\end{cases}, \quad 1\leq i\not= j\leq m, \quad a_{i,i}=0, \quad 1\leq i \leq m.
\end{equation*}
In matrix form, $A$ is a symmetric $m\times m$--matrix with zeros on the diagonal and $\sum_{j=1}^m a_{i,j} $ being equal to the valency of the node $i$:
\begin{equation*}
A=
\begin{pmatrix}
0 && a_{1,2} && \dots && a_{1,m} \\
a_{2,1} && 0 && \dots && a_{2,m}\\
\vdots && \vdots && \ddots && \vdots\\
a_{m,1} && a_{m,2} && \dots && 0
\end{pmatrix}.
\end{equation*}
Define the {\it spectrum} $\sigma(A)$ of $A_G$ as a set of its eigenvalues $\lambda_i(A)=\lambda_i(G)=\lambda_i$, $i=1,\ldots, m$. 
An \textit{induced subgraph} $H$ of $G$ is a graph with vertices set $V(H)\subseteq V(G)$ and all of the edges of $G$ connecting pairs of vertices in $V(H)$. 

Due to the symmetry of $A$, it holds $\sigma(A)\subset \R$.  Let $\text{tr}(A)=\sum_{i=1}^m a_{i,i}$ be the trace of $A$. It obviously holds  $\text{tr}(A)=\sum_{i=1}^m \lambda_i(A)$. Later the \textit{Newton polynomial}
 $N(A,k):=\text{tr}(A^k)=\sum_{i=1}^m \lambda_i^k(A)$ \textit{of degree $k$} with $k\in\N$ and an adjacency matrix $A$, will be of interest to us. It is well--known that the spectrum of an $m\times m$--matrix $A$ can be uniquely restored from the values $N(A,k),$ $k=1,\ldots,m$, cf. e.g. \cite[p. 93]{Gant04}.

 \begin{lemma}\label{lemm:Newton} \label{lemm:NewtonPolynomials}
Let $k\leq m$ be an integer and $A$ be the adjacency matrix of a graph $G$ with $m$ vertices. Then
\begin{enumerate}[a)]
\item \label{Lemma_1_a} the Newton polynomials can be calculated recursively as
 \begin{equation}\label{eq:Newton}
 N(A,k)= -k\sum_{|H|=k}(-1)^{e(H)+c(H)}2^{c(H)}-\sum_{j=2}^{k-2}N(A,k-j) \sum_{H: |H|=j}   (-1)^{e(H)+c(H)} 2^{c(H)},  
 \end{equation}
 where the inner sum runs over all subgraphs $H$ of $G$   with $j$ nodes and connected components being either edges or cycles, $e(H)$ being the numbers of edges among these components and $c(H)$ being the number of cycles. 

\item \label{Lemma_1_b} the Newton polynomial of degree $k$ can be interpreted as the number of all cycles of length $k$ in $G$.
\end{enumerate}
 \end{lemma}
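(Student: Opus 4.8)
The plan is to handle the two parts with different classical tools: part (\ref{Lemma_1_b}) is the walk-counting interpretation of matrix powers, while part (\ref{Lemma_1_a}) combines Sachs' coefficient theorem for the characteristic polynomial of $G$ with Newton's identities.

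For part (\ref{Lemma_1_b}) I would expand the diagonal of $A^k$ directly. Since $(A^k)_{i,i}=\sum_{i_1,\dots,i_{k-1}}a_{i,i_1}a_{i_1,i_2}\cdots a_{i_{k-1},i}$ and each summand equals $1$ exactly when $(i,i_1,\dots,i_{k-1},i)$ is a closed walk of length $k$ in $G$, summing over $i$ shows that $N(A,k)=\tr(A^k)$ counts all closed walks of length $k$ (the ``cycles'' of the statement). This part is pure bookkeeping.

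The core of part (\ref{Lemma_1_a}) is to establish Sachs' theorem. Writing $\det(\lambda I-A)=\sum_{i=0}^m a_i\lambda^{m-i}$, I would expand the determinant over permutations, $\det(\lambda I-A)=\sum_{\pi\in S_m}\operatorname{sgn}(\pi)\prod_{j}(\lambda I-A)_{j,\pi(j)}$. Because $A$ has zero diagonal, a fixed point of $\pi$ contributes a factor $\lambda$ and a moved point $j$ contributes $-a_{j,\pi(j)}$; hence $a_i$ collects exactly those $\pi$ with $i$ moved points all of whose edges $(j,\pi(j))$ lie in $G$. The nontrivial cycles of such a $\pi$ must be transpositions (realizing edges of $G$) or cyclic permutations of length $\ge 3$ (realizing cycles of $G$), so they are in bijection with the subgraphs $H$ whose components are edges or cycles. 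The key combinatorial point is that each cycle-component of $H$ admits exactly two cyclic realizations (its two traversal directions), which supplies the factor $2^{c(H)}$, while a short computation combining $\operatorname{sgn}(\pi)$ with the sign $(-1)^i$ coming from the $i$ moved off-diagonal entries collapses to $(-1)^{e(H)+c(H)}$ (using $i=2e(H)+\sum_\alpha \ell_\alpha$ for the cycle lengths $\ell_\alpha$). This yields $a_i=\sum_{|H|=i}(-1)^{e(H)+c(H)}2^{c(H)}$.

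It then remains to convert coefficients into power sums via Newton's identities $p_k=\sum_{i=1}^{k-1}(-1)^{i-1}e_i\,p_{k-i}+(-1)^{k-1}k\,e_k$, where $p_k=N(A,k)$ and $e_i$ are the elementary symmetric functions of the spectrum. Using the relation $a_i=(-1)^i e_i$ all signs cancel to give $N(A,k)=-k\,a_k-\sum_{i=1}^{k-1}a_i\,N(A,k-i)$; substituting the Sachs expression for the $a_i$ produces the stated recursion once the two boundary terms are dropped, namely the $i=1$ term (which vanishes since $a_1=-\tr(A)=0$) and the $i=k-1$ term (which vanishes since $N(A,1)=\tr(A)=0$). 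This is exactly why the inner sum runs only over $2\le j\le k-2$. The step I expect to be the main obstacle is the sign-and-multiplicity bookkeeping in Sachs' theorem, that is, reconciling $\operatorname{sgn}(\pi)$, the $(-1)^i$ from the moved factors, and the two orientations of each long cycle into the single clean weight $(-1)^{e(H)+c(H)}2^{c(H)}$. I would also flag the minor terminological caveat that, strictly, $\tr(A^k)$ counts closed walks rather than cycles, so ``cycle'' in part (\ref{Lemma_1_b}) must be read in that broad sense.
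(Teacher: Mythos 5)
Your proof is correct and follows essentially the same route as the paper: part b) via the walk-counting interpretation of $\tr(A^k)$, and part a) by combining the Sachs coefficient theorem for the characteristic polynomial with Newton's identities, dropping the $i=1$ and $i=k-1$ boundary terms because $\tr(A)=0$. The only difference is that you derive Sachs' theorem from the permutation expansion of the determinant, whereas the paper simply cites it (Theorem 3.10 of Bapat) along with the Newton-identity recursion.
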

 Here, we call a cycle of length $k$ any closed path (possibly with self--intersections) with $k$ edges from a vertex to itself. 
 \begin{proof}\mbox{ }
 \begin{enumerate}[a)]
\item It is known that Newton polynomials can be represented as polynomials of elementary symmetric polynomials of the eigenvalues $\lambda_i(A)$, $i=1,\ldots,m$ with integer coefficients, see \cite[Chapter 11, \S 53]{Kurosh75}. Since each elementary symmetric polynomial $S_k$ of $\lambda_1(A), \ldots, \lambda_m(A)$ is a sum of principal minors of $A$ of the corresponding degree  $k\in\N$ (cf. \cite[p. 495]{CarlMeyer}), and these minors have integer values due to $a_{i,j}\in \{0,1\}$, we get that the values of $N(A,k)$ are integers. 
 Moreover, $S_1=N(A,1)=0$.    For $k\ge 2$ we have  
\begin{equation}\label{eq:Newtonk<m}
 N(A,k)= -kS_j + \sum_{j=2}^{k-2}(-1)^{j-1}N(A,k-j)S_j,  \end{equation}
 where we put $S_j=0$, $j>m$. By \cite[Theorem 3.10]{Bap}, it holds 
 \begin{equation}\label{eq:SymmetricPol}
 S_j=(-1)^j\sum_{H: |H|=j} (-1)^{e(H)+c(H)} 2^{c(H)}.
  \end{equation}
%
%
\item This interpretation follows immediately from \cite[Proposition 1.3.1]{BroHae}, since the $i^\text{th}$ diagonal entry of the $k^\text{th}$ power of $A$ is the number of walks of length $k$ from vertex $i$ to itself.
\end{enumerate}
 \end{proof}

For fullerenes, vertex adjacency matrices and their spectra are well-studied, see \cite[Section 4.5]{AndKarSkr16}. As mentioned above, we generate dual facet graphs $T_n$ out of the Schlegel diagrams of $C_n$ and consider the spectra of their adjacency matrices. In $T_n$ the original facets become vertices, and the original vertices become facets. The edges of the dual graph show adjacency relations between original facets: two nodes of the dual graph are connected by an edge if the corresponding facets of the fullerene are adjacent, i.e. share an edge. In Figure \ref{fig:bucky_dual}, one can see the dual graph $T_{60}$ of all facets of Buckminster fullerene with the Schlegel diagram in Figure \ref{fig:buckyball}. Notice that (for the sake of legibility) the facet $f_1$ is displayed five times in Figure \ref{fig:bucky_dual}, whereas it should occur just once. In this paper, we use red, green and white nodes in the images of the dual graphs of $C_n$ for pentagons, hexagons and unspecified facets, respectively.

Consider two important induced subgraphs $T^5_{n}$ and $T^6_{n}$ of $T_n$. The graph $T^5_{n}$ illustrates the connectivity between pentagonal facets, i.e. it always contains 12 vertices. For instance, the graph $T^5_{n}$ of every IPR-isomer consists of 12 disconnected vertices. This being said, it is evident that the number of isomers of $C_n$ with the very same graph $T^5_{n}$ increases rapidly with increasing $n$, cf. Figure \ref{fig:number_isomere} for the IPR-case. Hence, considering the graph $T^5_{n}$ does not yield an invariant for all $C_n$-isomers. In order to characterize all isomers we need the graph $T^6_{n}$ showing the connectivity of all $m_6$ hexagonal facets of a $ C_n$-isomer. As an example the graph $T^6_{60}$ of the Buckminster fullerene is shown in Figure \ref{fig:bucky_dual_hexa}. It turns out that $T^6_{n}$ completely characterizes the graph $T_n$. We denote by $A_n, A_n^5,A_n^6$ the adjacency matrix of $T_n, T_n^5$ and $T_n^6$, respectively.

\begin{remark}\label{remark_1}
	\begin{enumerate}[a)]
	\item For $k>m$ a formula similar to (\ref{eq:Newton}) can be derived, by which it follows that traces of the $k^\text{th}$ power of $A$ with $k>m$ are linear combinations of traces of smaller powers. This can be explained by the fact that subgraphs have at most as much vertices as the whole graph.\label{remark:1a}
	\item An alternative approach is to insert formula (\ref{eq:Newtonk<m}) into itself, which yields a representation of Newton polynomials as a polynom with several unknowns being Newton polynomials of lower degree.
		\item The condition that every vertex in a fullerene has valency three corresponds to the fact that the dual graph $T_n$ consists of triangles only. However, the subgraphs $T^5_{n}$ and $T^6_{n}$ may also have larger cycles.  
		\item No 4-cycles exist neither in $T_{n}$ nor in $T^5_{n}$ and $T^6_{n}$, see \cite[Theorem 4.15 (1)]{BuchstEro}.\label{remark_1_b}
\item The problem of description of all  simple cycles  (i.e.,  closed loops without self--intersections) of pentagonal or hexagonal facets of length $k$ is crucial to combinatorial classification of fullerenes.
	\end{enumerate}
\end{remark}
\begin{figure}[H]    								
\centering 	
\subfigure[Dual graph $T_{60}$ of all facets]{									
\includegraphics[scale=0.15]{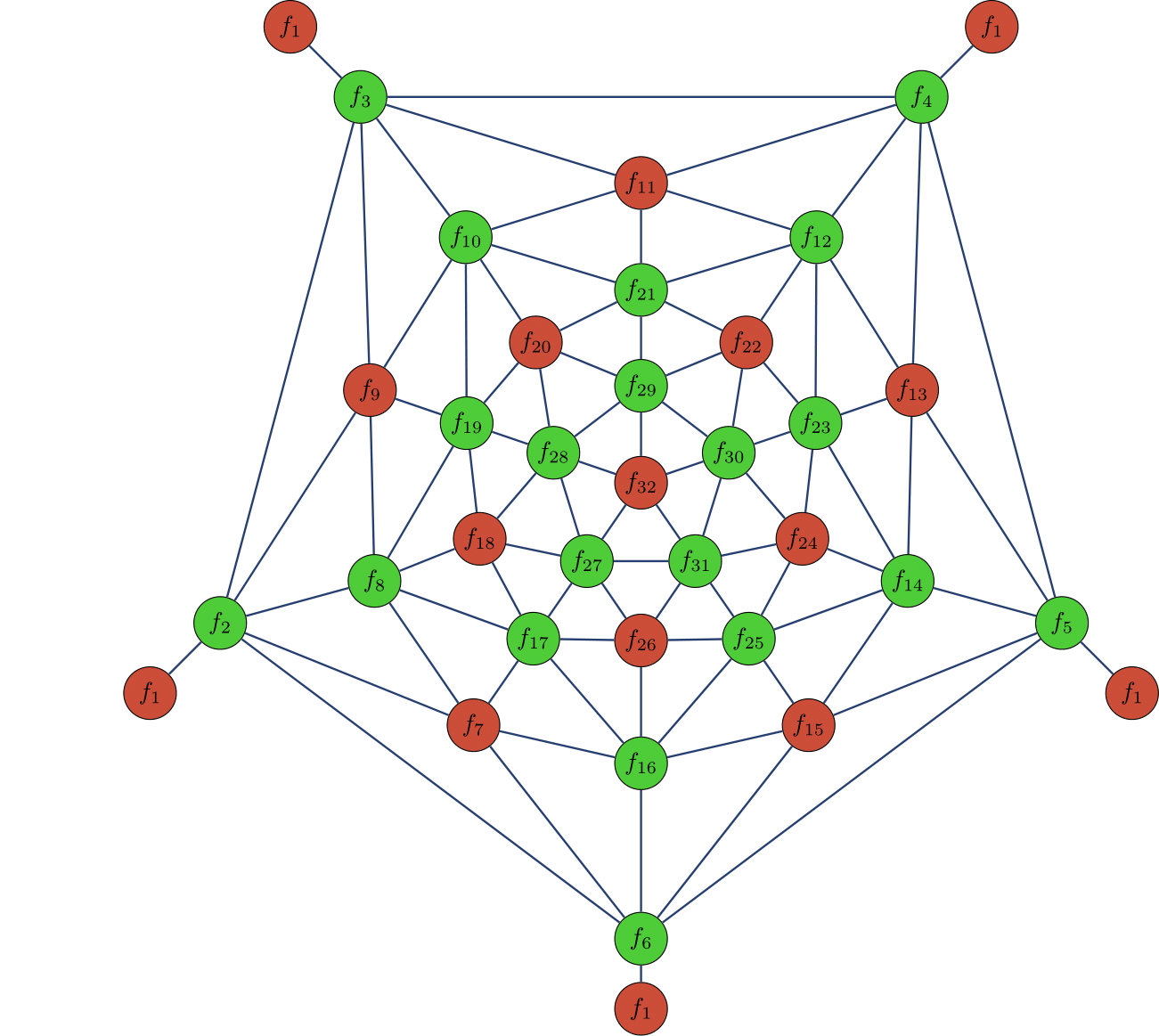}
\label{fig:bucky_dual} 	
}	
\subfigure[Hexagonal dual graph $T^6_{60}$] {
\includegraphics[scale=0.15]{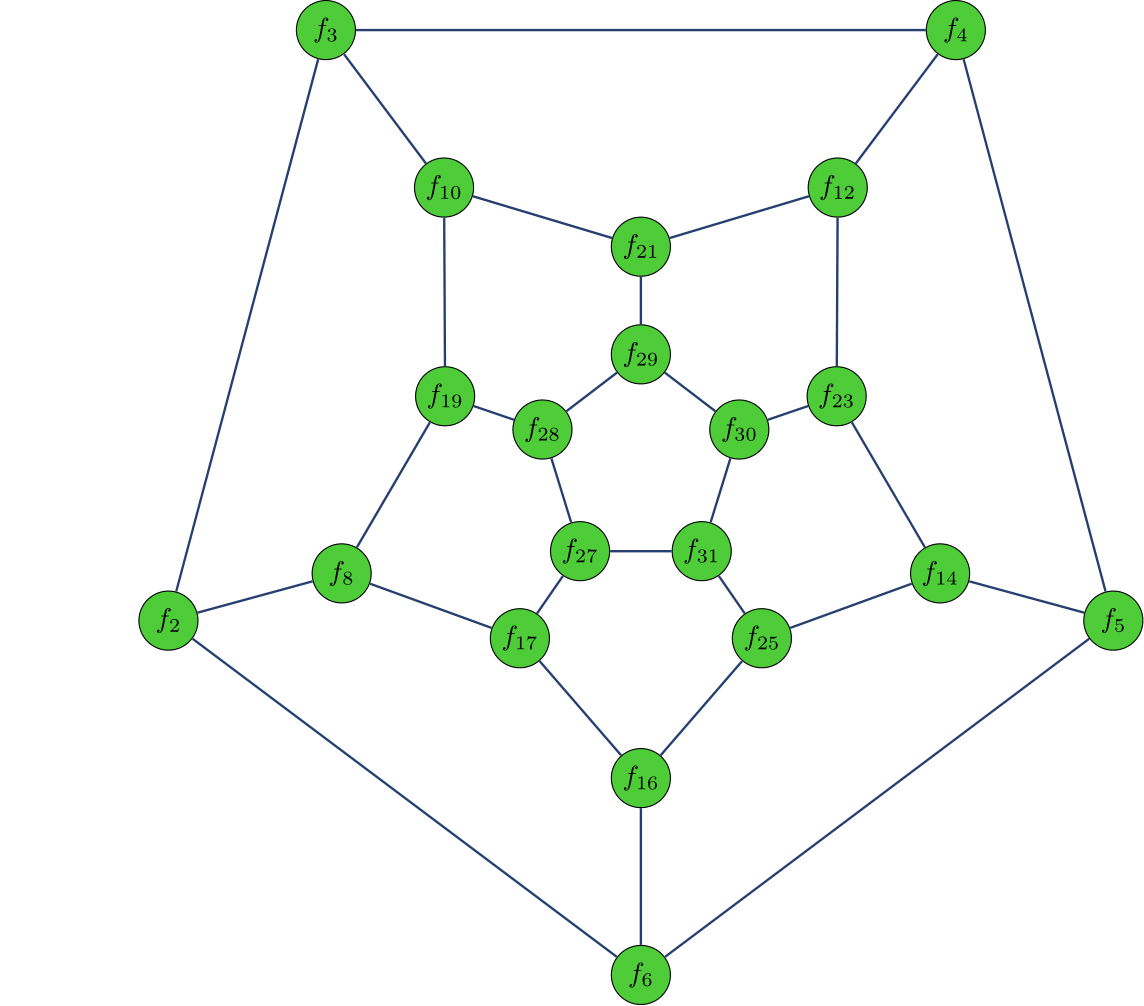}
\label{fig:bucky_dual_hexa}
}
\caption{Dual facet graphs of Buckminster fullerene}
\end{figure}
Denote by $ \N_0$ the set of natural numbers and zero. Obviously, it holds
 $N(A^6_n,k)\in \N_0$ for all $k\in \N$. 

For the dual graphs $T^5_{n}$ and $T^6_{n}$ we construct their adjacency matrices $A_{n}^5$ and $A_{n}^6$.
Since the number of unit entries in each line does not exceed $5$ for $A^5_{n}$ or $6$ for  $A^6_{n}$ it holds by Gershgorin's theorem that
\begin{align*}
\sigma( A_{n}^5) \subset [-5,5],~~ \sigma(A_{n}^6) \subset [-6,6]. 
\end{align*}
In the sequel, let us concentrate on the properties of $\sigma( A^6_{n})$.
The classical Frobenius--Perron theory applied to graph spectra \cite[Proposition 3.1.1]{BroHae}  yields a more accurate estimate for the largest eigenvalue of  $A_{n}^6$ which is positive and of multiplicity one if $T_n^6$ is connected.

\begin{lemma}\label{lemma:bounds_for_kappa}
Let $G$ be a graph with $m$ vertices and valencies $\kappa_1,\ldots,\kappa_m$, and $H$ be an induced subgraph of $G$. Let $\lambda_{\max}(G)$ and $\lambda_{\max}(H)$ be the largest eigenvalues of the adjacency matrix $A_G$ and $A_H$.
\begin{enumerate}[a)]
\item If $G$ is connected, then $\kappa_{\min}\leq\bar{\kappa}\leq \lambda_{\max}(G)\leq \kappa_{max}$, with $\bar{\kappa}$ being its mean valency and $\kappa_{\max}$ the maximum valency of the vertices in $G$. In particular, $\lambda_{\max}(G)=\kappa$ holds if $G$ is $\kappa$-regular. 
\item It holds $\sqrt{\frac{1}{m}\sum_{i=1}^m \kappa_i^2}\leq \lambda_{\max}(G)\leq \kappa_{\max}$.
\item It holds $\lambda_{\max}(H)\leq \lambda_{\max}(G)$.
\end{enumerate}

\begin{proof}
\begin{enumerate}[a)]
\item See \cite[Proposition 3.1.2]{BroHae}.
\item See \cite[Theorem 1.2]{largest_eigenvalue_of_a_agraph} and \cite[Comment on Proposition 3.1.2]{BroHae}.
\item See \cite[Lemma 3.16]{Bap}.
\end{enumerate}
\end{proof}

\end{lemma}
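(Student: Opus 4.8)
The plan is to prove Lemma~\ref{lemma:bounds_for_kappa} by assembling three standard facts from spectral graph theory, each of which is cited in the excerpt, so the main work is organizing the inequalities and checking the hypotheses under which they apply rather than developing new machinery. Throughout I write $A = A_G$ for the symmetric adjacency matrix and $\lambda_{\max}(G)$ for its largest eigenvalue, which is real since $A$ is symmetric.

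For part a) I would invoke the Frobenius--Perron theory for nonnegative irreducible matrices, which applies precisely because $G$ is assumed connected (so $A$ is irreducible). The standard bound states that the Perron eigenvalue of a nonnegative matrix lies between its minimum and maximum row sums; since the row sums of $A$ are exactly the valencies $\kappa_i$, this gives $\kappa_{\min} \le \lambda_{\max}(G) \le \kappa_{\max}$. To sharpen the lower bound to the mean valency $\bar\kappa$, I would use the Rayleigh quotient: evaluating $x^\top A x / x^\top x$ at the all-ones vector $x = \mathbf{1}$ gives $\mathbf{1}^\top A \mathbf{1} / m = \frac{1}{m}\sum_i \kappa_i = \bar\kappa$, and since $\lambda_{\max}(G) = \max_{x\neq 0} x^\top A x / x^\top x$, we obtain $\bar\kappa \le \lambda_{\max}(G)$. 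The regular case $\lambda_{\max}(G)=\kappa$ when $G$ is $\kappa$-regular follows because then $\mathbf{1}$ is an eigenvector with eigenvalue $\kappa$, which must be the Perron eigenvalue by the pinching $\kappa = \kappa_{\min} = \kappa_{\max}$.

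For part b) the upper bound $\lambda_{\max}(G)\le \kappa_{\max}$ is already contained in part a) (or follows directly from Gershgorin, as used earlier in the excerpt). The lower bound $\sqrt{\frac{1}{m}\sum_{i=1}^m \kappa_i^2}\le\lambda_{\max}(G)$ is the content of the cited Theorem~1.2 of \cite{largest_eigenvalue_of_a_agraph}; the cleanest self-contained route is to note that $\sum_i \kappa_i^2 = \sum_i (A^2)_{ii} = \operatorname{tr}(A^2) = \sum_{i=1}^m \lambda_i^2$, so the quantity under the root is the quadratic mean of the eigenvalues, and the largest eigenvalue dominates this mean via $\lambda_{\max}^2 \ge \frac{1}{m}\sum_i \lambda_i^2$. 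For part c) I would appeal to eigenvalue interlacing for induced subgraphs, exactly as in the cited \cite[Lemma 3.16]{Bap}: an induced subgraph $H$ corresponds to a principal submatrix $A_H$ of $A_G$, and Cauchy interlacing for symmetric matrices forces $\lambda_{\max}(A_H)\le\lambda_{\max}(A_G)$.

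I expect no genuine obstacle here, since each part reduces to a named classical result; the only points requiring care are verifying that the connectivity hypothesis in part a) is what licenses the Perron--Frobenius simplicity and the two-sided row-sum bound, and correctly identifying the principal-submatrix structure in part c) so that interlacing applies. The subtlest bookkeeping is recognizing in part b) that $\sum_i \kappa_i^2 = \operatorname{tr}(A^2)$, which converts a degree sum into a spectral sum and makes the quadratic-mean bound transparent; otherwise the proof is a matter of citing the three references cleanly.
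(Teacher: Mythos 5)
Your parts a) and c) are sound and simply make explicit the standard arguments behind the results that the paper proves by bare citation: the Perron--Frobenius row-sum bounds together with the Rayleigh quotient at the all-ones vector for a), and Cauchy interlacing of the principal submatrix $A_H$ inside $A_G$ for c). Since the paper's own proof consists of three references and nothing else, your versions of a) and c) are, if anything, more informative than the original.

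There is, however, a genuine error in the ``self-contained route'' you offer for part b), and it sits exactly at the step you single out as the key piece of bookkeeping. The identity $\sum_i \kappa_i^2 = \tr(A^2)$ is false: for a symmetric $0$--$1$ matrix, $(A^2)_{ii}=\sum_j a_{ij}^2=\sum_j a_{ij}=\kappa_i$, so $\tr(A^2)=\sum_i\kappa_i=2|E(G)|$ --- the sum of the degrees, not of their squares. (The paper itself uses precisely this fact when it observes that $N(A,2)$ equals twice the number of edges.) With the corrected identity your chain of inequalities only yields $\lambda_{\max}(G)^2\geq\frac{1}{m}\sum_i\lambda_i^2=\frac{1}{m}\sum_i\kappa_i=\bar\kappa$, i.e.\ $\lambda_{\max}(G)\geq\sqrt{\bar\kappa}$, which is weaker than the claimed bound $\lambda_{\max}(G)\geq\sqrt{\frac{1}{m}\sum_i\kappa_i^2}$ (the quadratic mean of the degrees dominates $\sqrt{\bar\kappa}$ whenever $\bar\kappa\geq 1$). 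The correct elementary argument evaluates the Rayleigh quotient of $A^2$ at $\mathbf{1}$ rather than taking a trace: $\mathbf{1}^{\top}A^2\mathbf{1}=\lVert A\mathbf{1}\rVert^2=\sum_i\kappa_i^2$, hence $\lambda_{\max}(A^2)\geq\frac{1}{m}\sum_i\kappa_i^2$, and one concludes via $\lambda_{\max}(A^2)=\lambda_{\max}(A)^2$, which holds because Perron--Frobenius gives $\lambda_{\max}(A)\geq|\lambda_i(A)|$ for all $i$ for the nonnegative matrix $A$. Since you also cite the reference for b), your proof does not collapse outright, but the derivation you present as making the bound ``transparent'' does not prove the stated inequality.
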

Notice that for all connected irregular graphs $T_n,T_n^5$ and $T_n^6$ we have $\bar{\kappa}>1$ and $\kappa_{max}\le 6$. For non--connected or irregular graphs we get only $0\le \bar{\kappa} \le \kappa_{max}$.
For instance, the dual graph $T^6_{60}$  of the Buckminster fullerene (see Fig. \ref{fig:bucky_dual_hexa})  is connected and regular  with $\kappa=3$, thus the  largest eigenvalue of $A_{60,6}$ for it is equal to 3.

In general, the value $\theta:=\kappa_{max} - \bar{\kappa}\ge 0$ is a measure of the asymmetry of the dual facet graph which we call  the {\it asymmetry  coefficient}.

\begin{remark}
If $k\to\infty$, we have $N(A_n^6,k)/ \lambda_{max}^{k} \sim a_{max}+(-1)^k {\bf 1}(- \lambda_{max}\in\sigma(A_{n}^6))$, where ${\bf 1}(B)$ is the indicator function of $B$ and $a_{max}$ is the multiplicity of the eigenvalue $\lambda_{max}$. Indeed, by \cite[Theorem 6.3]{Bap}, it holds $- \lambda_{max}\in\sigma( A_{n}^6)$ iff our dual graph $T_{n}^6$  is bipartite, i.e., it has no cycles of all odd lengths, cf. \cite[Corollary 3.12]{Bap}.    In this case,  $- \lambda_{max}$ has necessarily multiplicity one, see  \cite[Lemma 3.13]{Bap}. Since for large $n\geq 40$ the dual graph $T_{n}^6$ of hexagons of any isomer of $C_n$ contains either a 3-- or a 5--cycle, it holds  $- \lambda_{max}\not\in\sigma(A_n^6)$, and
  the behavior of the the whole Newton polynomial $\text{tr}\left(\left(A_{n}^6\right)^{k}\right)$  for large powers $k$ is dominated by $ \lambda_{max}^{k}$.
\end{remark}

Let $|A|$ be the cardinality of a finite set $A$.
\begin{lemma}\label{lemma:edges_T6-T5}
For all feasible $n$, consider a $C_n$--isomer with graphs $T_{n}$, $T_{n}^5$ and $T_{n}^6$. Then it holds
\begin{equation*}
|E(T^6_{n})|=|E(T^5_{n})|+ \frac{3n}{2}-60.  
\end{equation*} 
\end{lemma}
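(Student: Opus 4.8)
The plan is to reduce the identity to a double count of the edges of the full dual graph $T_n$, sorted by the types of facets they join. First I would record the one structural fact the whole argument rests on: because $P$ is a \emph{simple} convex $3$-polytope, any two distinct facets meet in at most a single edge, so a facet with $s$ sides is adjacent in $T_n$ to exactly $s$ distinct other facets. Consequently every pentagon has degree $5$ and every hexagon has degree $6$ in $T_n$, and $T_n$ carries no loops or multiple edges. This preliminary step is where I expect the only genuine care to be needed: one must invoke that two facets of a convex $3$-polytope intersect in $\emptyset$, a vertex, or an edge, so that the $s$ sides of a facet really correspond to $s$ \emph{distinct} neighbours; everything afterwards is pure counting and is independent of the particular isomer.

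Next I would partition $E(T_n)$ into the three classes of edges joining two pentagons, a pentagon and a hexagon, or two hexagons, and abbreviate by $p$ the number of pentagon--hexagon edges. Since $T_n^5$ and $T_n^6$ are by definition the subgraphs of $T_n$ induced on the $12$ pentagon vertices and on the $m_6=n/2-10$ hexagon vertices, $|E(T_n^5)|$ counts exactly the pentagon--pentagon edges and $|E(T_n^6)|$ exactly the hexagon--hexagon edges. I would then sum valencies on each side separately. Summing the $T_n$-degrees over the twelve pentagons gives $12\cdot 5=60$, where each pentagon--pentagon edge is counted twice and each pentagon--hexagon edge once, so
\begin{equation*}
60 = 2\,|E(T_n^5)| + p .
\end{equation*}
Summing the $T_n$-degrees over the $m_6$ hexagons gives $6\,m_6 = 6\left(\tfrac{n}{2}-10\right) = 3n-60$, and by the same bookkeeping
\begin{equation*}
3n-60 = 2\,|E(T_n^6)| + p .
\end{equation*}

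Finally I would subtract the first relation from the second to eliminate the unknown $p$, obtaining $2\,|E(T_n^6)| - 2\,|E(T_n^5)| = 3n-120$, which is precisely the asserted $|E(T_n^6)| = |E(T_n^5)| + \tfrac{3n}{2} - 60$. As a consistency check I would add the two displays instead, giving $2\,|E(T_n)| = 2\,|E(T_n^5)| + 2\,|E(T_n^6)| + 2p = 3n$, so $|E(T_n)| = \tfrac{3n}{2}$, in agreement with the handshake lemma applied to the $3$-regular fullerene graph $F$ (equivalently, Euler's relation for $C_n$). The main obstacle is therefore not computational but conceptual, namely the opening lemma that the $T_n$-valency of a facet equals its number of sides; once that is secured the claim follows by elementary double counting and holds uniformly for every feasible $n$.
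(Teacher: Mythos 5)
Your proof is correct and follows essentially the same route as the paper: partition $E(T_n)$ by facet type and double-count via facet valencies, using that each pentagon has five and each hexagon six neighbours in $T_n$. The only cosmetic difference is that you eliminate the pentagon--hexagon edge count by subtracting the hexagon degree sum from the pentagon degree sum, whereas the paper substitutes the pentagon degree sum into the total $|E(T_n)|=\tfrac{3n}{2}$; the two bookkeepings are algebraically equivalent.
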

\begin{proof}
Since the graph $T_n$ of a $C_n$--isomer is  $3$--regular, it has $\frac{3n}{2}$ edges. An edge $(v,w)\in E(T_n)$ is either an edge between two pentagons, $(v,w)\in E(T_n^5)$, or between two hexagons, $(v,w)\in E(T_n^6)$, or between a pentagon and a hexagon, $(v,w)\in E(T_n\setminus \left(T_n^5 \cup T_n^6\right))$. Since the number of pentagons is always $12$, there are $60-2|E(T_n^5)|$ edges between pentagons and  thus 
$$
|E(T_n^5)|+|E(T_n^6)|+60-2|E(T_n^5)|=\frac{3n}{2},
$$
which finishes the proof.  
\end{proof}

\section{Cospectral Isomers}\label{section:cospectral_isomers}

\begin{definition}
Let $A_G$,$A_H$ be adjacency matrices of graphs $G$ and $H$ with $m$ vertices each. $G$ and $H$ (or $A_G$ and $A_H$) are said to be \textit{cospectral} if $\sigma(A_G)=\sigma(A_H)$.
\end{definition}

It is easy to prove that isomorphic graphs are cospectral. The inverse statement is in general not true (cf. e.g. a counterexample in \cite{which_graphs_are_determined_by_their_spectrum}). The natural question arises: {\it Which graphs are determined by their spectrum?} \cite{which_graphs_are_determined_by_their_spectrum}. Some specific graphs like paths, complete graphs, regular complete bipartite graphs, cycles and their complements yield a positive answer to this question. In what follows, we provide new examples of non--isomorphic cospectral graphs coming from the world of fullerenes.

We derive some theoretical results for sets of non-cospectral graphs and apply them to fullerene isomers. To begin with,  recall the following

\begin{lemma}\label{lemma:cospectral_equivalency}
For graphs $G$ and $H$ with $m$ vertices and adjacency matrices $A_G$ and $A_H$ the following statements are equivalent:
\begin{enumerate}[a)]
\item $G$ and $H$ are cospectral.\label{lemma:cospectral_equivalency_a}
\item $A_G$ and $A_H$ have the same characteristic polynomial.\label{lemma:cospectral_equivalency_b}
\item $N(A_G,k)=N(A_H,k)$ for all $k=1,\ldots,m$.\label{lemma:cospectral_equivalency_c}
\end{enumerate}
\end{lemma}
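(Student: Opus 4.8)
The plan is to prove the equivalence of the three statements \ref{lemma:cospectral_equivalency_a}, \ref{lemma:cospectral_equivalency_b} and \ref{lemma:cospectral_equivalency_c} by establishing a cycle of implications, or more symmetrically, by showing each statement is equivalent to a common intermediary. Since all three are standard facts from the theory of symmetric matrices, the work is essentially bookkeeping; the only mild subtlety is keeping track of why $m$ values of the Newton polynomials suffice. First I would settle \ref{lemma:cospectral_equivalency_a} $\Leftrightarrow$ \ref{lemma:cospectral_equivalency_b}. Because $A_G$ and $A_H$ are real symmetric, they are diagonalizable over $\R$, so the characteristic polynomial factors completely as $\chi_A(t)=\prod_{i=1}^m (t-\lambda_i)$ with all roots real and counted with multiplicity. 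The spectrum $\sigma(A)$, understood as the multiset of eigenvalues, is therefore precisely the multiset of roots of $\chi_A$. Two monic polynomials of the same degree coincide if and only if they have the same roots with the same multiplicities, which gives the equivalence directly.

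Next I would treat \ref{lemma:cospectral_equivalency_b} $\Leftrightarrow$ \ref{lemma:cospectral_equivalency_c}. The forward direction is immediate: if $\chi_{A_G}=\chi_{A_H}$, then $A_G$ and $A_H$ have identical spectra, hence $N(A_G,k)=\sum_i \lambda_i^k(A_G)=\sum_i \lambda_i^k(A_H)=N(A_H,k)$ for every $k$. The reverse direction is the heart of the matter and relies on the fact already quoted in the excerpt (cf. \cite[p. 93]{Gant04}) that the spectrum of an $m\times m$ matrix is uniquely determined by the values $N(A,k)$, $k=1,\ldots,m$. Concretely, the power sums $p_k:=N(A,k)$ and the elementary symmetric polynomials $S_k$ of the eigenvalues are linked by the Newton--Girard identities, which over a field of characteristic zero can be solved recursively to express $S_1,\ldots,S_m$ in terms of $p_1,\ldots,p_m$. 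Since the coefficients of $\chi_A$ are (up to sign) exactly the $S_k$, equality of $N(A_G,k)$ and $N(A_H,k)$ for $k=1,\ldots,m$ forces equality of all coefficients of the two characteristic polynomials, giving \ref{lemma:cospectral_equivalency_b}.

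I expect the only genuine obstacle to be making the recursion in the Newton--Girard step airtight, in particular justifying that knowing the first $m$ power sums is enough to pin down all $m$ elementary symmetric functions (higher power sums add no new information, consistently with Remark \ref{remark_1}\ref{remark:1a}). This is guaranteed by the invertibility of the triangular system relating $\{p_k\}$ and $\{S_k\}$ in characteristic zero, and it is precisely the content of the classical result from \cite{Gant04} invoked earlier. Having chained \ref{lemma:cospectral_equivalency_a} $\Leftrightarrow$ \ref{lemma:cospectral_equivalency_b} $\Leftrightarrow$ \ref{lemma:cospectral_equivalency_c}, the proof is complete. No use of the graph-theoretic interpretation from Lemma \ref{lemm:NewtonPolynomials}\ref{Lemma_1_b} is needed here; the statement is purely spectral.
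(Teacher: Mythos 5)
Your proof is correct and follows essentially the same route as the paper: the equivalence of a) and b) via diagonalizability of real symmetric matrices, and the equivalence of b) and c) via the Newton--Girard recursion expressing the elementary symmetric polynomials $S_k$ (hence the coefficients $(-1)^kS_k$ of the characteristic polynomial) in terms of the power sums $N(A,1),\ldots,N(A,m)$. No substantive differences to report.
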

\begin{proof}
Equivalence of \ref{lemma:cospectral_equivalency_a}) and \ref{lemma:cospectral_equivalency_b}) is obvious.

Now prove the equvalence of \ref{lemma:cospectral_equivalency_c}) and \ref{lemma:cospectral_equivalency_b}). As stated in  \cite[Chapter 11, \S 53]{Kurosh75} elementary symmetric polynomials $S_k$ of degree $k$ of eigenvalues of $A$ can be expressed as   
\begin{align*}
S_k=(-1)^{k-1}\frac{N(A,k)}{k}-\frac{1}{k}\sum_{i=1}^{k-1}(-1)^i S_{k-i}N(A,i), ~~~ 1\leq k\leq n.
\end{align*} 
Since $S_1=N(A,0)=0$ and $S_2=-N(A,2)/2$, every $S_k$ with $2\leq k\leq n$ can be computed just knowing $N(A,2),\ldots,N(A,k)$. Hence, the symmetric polynomials are identical for both $A_G$ and $A_H$. Finally, the coefficients of the characteristic polynomials can be expressed as $(-1)^kS_k$, so the characteristic polynomials are equal as well. 
\end{proof}

 For all even $24\leq n \leq 150$, we checked the existence of cospectral pairs of  $C_n$-isomers within $T_n$, $T^5_{n}$ and $T^6_{n}$ graphs. Using Lemma \ref{lemma:cospectral_equivalency}, we  applied  MATLAB functions {\tt eig()}, {\tt charpoly()} and {\tt trace()} with double precision, cf. \cite{Matlab}. In cases where two isomers seem to be cospectral with respect to $T_n$ or $T_n^6$, we increase the precision by using {\tt vpi} format and MATLAB symbolic toolbox \cite{Matlab}.

For $n < 32$ no pair of cospectral isomers with respect to $T_n,T_n^5$ and $T_n^6$ can be found. Our results for $32\leq n \leq 60$ are listed in Table \ref{tab:cospectral}. This table can be extended to $n>60$ with all zeroes in its first and third rows, and positive integers in its second row. 

Considering the whole dual graph $T_n$ one finds only one pair of cospectral isomers with $n=44$, compare Figures \ref{fig:C_44_37} and \ref{fig:C_44_38}. To explain the difference within this pair, we need the following 

\begin{definition}
Let $G$ and $H$ be two graphs.
\begin{enumerate}[a)]
\item A fragment $F$ of $G$ is a connected induced subgraph of $G$. In particular, for fullerenes we call fragments of $T_n^5$ and $T_n^6$ pentagon-fragments and hexagon-fragments of $T_n$, respectively.
\item Two non-isomorphic graphs $G$ and $H$ are called fragment flipped if two isomorphic fragments $F_G$ in $G$ and $F_H$ in $H$ exist such that the remaining graphs $G\setminus F$ and $H\setminus F$ are isomorphic. 
\end{enumerate} 
\end{definition} 

As one can see in Figure \ref{fig:C_44_37} and \ref{fig:C_44_38}, the two cospectral $C_{44}$-isomers are fragment flipped. However, in general such a flip does not preserve the spectrum of a graph. To illustrate this, Figure \ref{fig:C_60_4} and \ref{fig:C_60_5} contains a non--cospectral fragment flipped pair of $C_{60}$--isomers.

For $n\in\lbrace 32, 36,40, 52\rbrace$ a pair of distinct isomers exists with the same spectrum $\sigma\left(T_{n}^6\right)$, since their graphs $T_{n}^6$ are isomorphic. 

\begin{table}[H]    								
	\centering
	\begin{tabular}{|r || c | c | c | c | c | c | c | c | c | c | c | c | c | c | c |}\hline
		$n$ & 32 & 34 & 36 & 38 & 40 & 42 & 44 & 46 & 48 & 50 & 52 & 54 & 56 & 58 & 60 \\ \hline
		 $T_n$ & 0 & 0 & 0 & 0 & 0 & 0 & 1 & 0 & 0 & 0 & 0 & 0 & 0 & 0 & 0  \\
		$T^5_{n}$ & 0 & 0 & 0 & 0 & 3 & 3 & 15 & 16 & 42 & 63 & 95 & 112 & 148 & 147 & 177 \\
		 $T^6_{n}$ & 1 & 0 & 1 & 0 & 1 & 0 & 0 & 0 & 0 & 0 & 1 & 0 & 0 & 0 & 0 \\
		 \hline
	\end{tabular}
	\caption{Number of non-unique spectra of graphs $T_n$, $T^5_{n}$ and $T^6_{n}$ of $C_{n}$--isomers.}
	\label{tab:cospectral}
\end{table}

For a fixed $40\leq n\leq 150$ at least three and at most 298 non-unique spectra $\sigma(T_n^5)$ exist. Since the amount of different arrangements of 12 pentagons is limited and the number of isomers grows rapidly with increasing $n$, there must exist isomers with same subgraphs $T_n^5$. We discuss the number and different configurations of pentagon-fragments in \cite{BBS_2} in more detail.
 
The above empirical findings lead to the following

\begin{conjecture}\label{con:cospectral}
\begin{enumerate}[a)]
\item For all feasible $n \not=44$ two $C_n$--isomers are isomorphic iff they are cospectral with respect to $T_n$.
\item For all feasible $n\geq 54$ two $C_n$--isomers are isomorphic iff they are cospectral with respect to $T_n^6$. \label{con:spectral_b}
\item For any feasible $n$ at least one of the spectra $\sigma(T_n)$, $\sigma(T_n^5)$, $\sigma(T_n^6)$ is unique for all $C_n$--isomers.
\end{enumerate}
\end{conjecture}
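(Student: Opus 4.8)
The plan is to separate the trivial direction from the hard one and to reduce everything to a single spectral statement. In each of parts (a)--(c) the forward implication is immediate: isomorphic graphs share an adjacency matrix up to relabelling, hence the same spectrum. By the paper's own definitions a combinatorial equivalence of isomers is exactly an isomorphism of the dual graphs $T_n$, and since $T_n^6$ already characterizes $T_n$ completely (through the pentagon-reconstruction of \cite{BBS_2}), an isomorphism of the $T_n^6$ also forces isomorphism of the isomers. Consequently parts (a) and (b) each reduce to the single hard assertion that the relevant graph is \emph{determined by its spectrum} (DS) within its class, i.e.\ cospectral $\Rightarrow$ isomorphic. I would first record the translation of cospectrality into cycle counting: by Lemma \ref{lemma:cospectral_equivalency} cospectrality is equivalent to $N(A,k)=N(A',k)$ for all $k$, and by Lemma \ref{lemm:Newton} these Newton polynomials are precisely the cycle counts, so the task becomes showing that the full list of cycle counts is a complete isomorphism invariant for the triangulations $T_n$, respectively for the hexagon graphs $T_n^6$.

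The plan is then to exploit the extreme rigidity of these graphs. For part (a), $T_n$ is a triangulation of the sphere whose degree sequence consists of exactly twelve $5$'s and $n/2-10$ $6$'s; the low Newton polynomials are forced and carry no information ($N(A_n,2)=3n$ and $N(A_n,3)=6n$ are identical for every isomer, reflecting the fixed edge count $3n/2$ and the $n$ triangular faces), so all discrimination lives in the counts of longer cycles. As the twelve pentagons are visible directly as the degree-$5$ vertices, the combinatorial content is trivial and only the DS property is at stake, with $n=44$ the unique accidental coincidence. For part (b) one loses direct sight of the pentagons, but a hexagon adjacent to $p$ pentagons has degree $6-p$ in $T_n^6$, so the degree deficiencies together with cycles longer than triangles encode the pentagon arrangement; this is exactly the reconstruction of \cite{BBS_2}. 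I would extract the moments of the degree sequence from $N(A_n^6,k)$ for small $k$ (simplified by the absence of $4$-cycles, see Remark \ref{remark_1}) and the longer-cycle data from higher $k$, feed both into the reconstruction to pin down where the pentagons sit relative to the hexagon skeleton, and thereby recover $T_n$. The threshold $n\ge 54$ is precisely where the hexagon skeleton becomes large and connected enough for this localisation to be unambiguous, the sporadic failures $n\in\{32,36,40,52\}$ occurring while it is still too small.

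The \textbf{main obstacle}, common to (a) and (b), is the final inference from ``equal cycle counts for every length'' to ``isomorphic graph''. This is a determined-by-spectrum statement for an \emph{infinite} family, for which no general method is known and which is open even for far simpler graph classes. My proposed route is a local-to-global reconstruction: show that the cycle counts determine, for each vertex, the isomorphism type of its combinatorial neighbourhood (its local pentagon--hexagon pattern), and that in a sphere triangulation these local patterns admit a unique gluing. The delicate points are this gluing/global-rigidity step and, above all, the exclusion of accidental cospectral non-isomorphic pairs for \emph{all} large $n$: the explicit computation already covers $n\le 60$ (isolating $n=44$ for $T_n$ and $n\in\{32,36,40,52\}$ for $T_n^6$ as the only exceptions), and one would still need a growth or counting argument to rule out further coincidences. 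This is where the genuine difficulty of the conjecture concentrates.

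Finally, part (c) need not be attacked independently. Granting (a), the spectrum $\sigma(T_n)$ is a complete invariant of the $C_n$-isomers for every feasible $n\neq 44$, which already furnishes a unique spectrum in that case. For the single remaining value $n=44$, Table \ref{tab:cospectral} shows that $T_{44}^6$ has no non-unique spectra, i.e.\ $\sigma(T_{44}^6)$ is a complete invariant, so at least one of the three spectra is unique there as well. Thus (c) follows from (a) together with this finite verification, and equivalently amounts to the assertion that the sets of feasible $n$ admitting a cospectral non-isomorphic pair with respect to $T_n$, respectively $T_n^6$, are disjoint.
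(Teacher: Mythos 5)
There is no proof to compare against: the statement you are addressing is labelled a \emph{Conjecture} in the paper, introduced with the words ``The above empirical findings lead to the following'' and supported only by exhaustive computation for feasible $n$ up to $150$ (Table \ref{tab:cospectral} and the surrounding discussion). The paper offers no argument for any $n$ beyond the computed range, and your proposal does not close that gap either. The reductions you perform at the outset are sound and essentially cost-free --- the forward implications are trivial, and Lemma \ref{lemma:cospectral_equivalency} together with Lemma \ref{lemm:Newton} does translate cospectrality into equality of all cycle counts --- but everything then hinges on the determined-by-spectrum assertion for the infinite families $\{T_n\}$ and $\{T_n^6\}$, which is exactly the content of the conjecture and which you explicitly leave open (``this is where the genuine difficulty of the conjecture concentrates'').

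Two specific steps in your proposed route would need far more than is sketched. First, the local-to-global strategy begins by claiming that the cycle counts determine, for each vertex, the isomorphism type of its combinatorial neighbourhood; but Newton polynomials are global aggregates (traces), and there is no mechanism offered for localising them to individual vertices --- indeed the existence of cospectral non-isomorphic graphs in general, and the concrete pair $C_{44,37}$, $C_{44,38}$ in this very family, shows that equal cycle counts of every length do \emph{not} force a vertex-by-vertex matching of local structure. Second, even granting a local reconstruction, the exclusion of ``accidental'' cospectral pairs for all large $n$ would require a counting or rigidity argument that neither you nor the paper supplies; the sporadic exceptions at $n=44$ (for $T_n$) and $n\in\{32,36,40,52\}$ (for $T_n^6$) show that such accidents do occur and must be ruled out case by case or by a genuinely new idea. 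Your observation that part (c) follows from part (a) plus the single finite check at $n=44$ is correct and is a modest structural improvement over the paper's flat statement of three independent items, but it is conditional on (a) and so does not change the status of the statement: it remains a conjecture, and your proposal is a programme rather than a proof.
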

 
For a graph $G$ with $m$ vertices we denote by $|\sigma(G)|$ the set of absolute values of eigenvalues $\lambda_i\in\sigma(G)$ for $i=1,\ldots,m$ sorted in descending order, i.e. $|\sigma(G)|:=\lbrace |\lambda_i|\mid \lambda_i \in\sigma(G),~ |\lambda_i|\geq |\lambda_{i+1}|~\forall i=1,\ldots,m-1 \rbrace$, and call it the \textit{absolute spectrum} of $G$. Denote by $|\sigma_{[0,1)}(G)|$ and $|\sigma_{[1,\infty)}(G)|$ the part of $|\sigma(G)|$ with $0\leq|\lambda|<1$ and $1\leq |\lambda|$. It holds $\lambda_1(G)=\lambda_{\max}(G)$
 
\begin{theorem}\label{theorem:main_general}
Let $\Gamma$ be a set of graphs with $m$ vertices and with distinct absolute spectra such that $\lambda_{\max}(G)>1$ for all $G\in\Gamma$. 
Then all graphs $G\in\Gamma$ can be uniquely characterized by at most two Newton polynomials of even degrees $k_1^*$, $k_2^*$ with $m\geq k_1^*\geq k_2^*$.
\end{theorem}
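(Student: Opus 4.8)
The plan is to reduce everything to the absolute spectrum and then to separate finitely many points by two well-chosen linear functionals. The starting observation is that for an \emph{even} degree $k=2q$ one has $\lambda_i^{k}=|\lambda_i|^{k}$ for every real eigenvalue, so
\[
N(A_G,k)=\sum_{i=1}^{m}\lambda_i^{k}(G)=\sum_{i=1}^{m}|\lambda_i(G)|^{k},
\]
i.e. an even-degree Newton polynomial is a functional of the absolute spectrum $|\sigma(G)|$ alone. Since there are only finitely many graphs on $m$ vertices, $\Gamma$ carries only finitely many distinct absolute spectra, and by hypothesis they are pairwise distinct. Hence it suffices to exhibit two even degrees $k_2^*\le k_1^*\le m$ such that the map $G\mapsto\bigl(N(A_G,k_1^*),N(A_G,k_2^*)\bigr)$ is injective on $\Gamma$.

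First I would introduce, for each unordered pair $G\neq H$ in $\Gamma$, the difference $f_{GH}(k):=N(A_G,k)-N(A_H,k)$. Writing $\beta$ for the distinct values among the squared absolute eigenvalues $|\lambda_i|^2$ occurring in $G$ or $H$ and $d_\beta\in\Z$ for the signed difference of their multiplicities, one gets, for even $k=2q$,
\[
f_{GH}(2q)=\sum_{\beta} d_\beta\,\beta^{\,q}.
\]
Because $|\sigma(G)|\neq|\sigma(H)|$, not all $d_\beta$ vanish, so this is a \emph{nonzero} generalized Dirichlet polynomial in $q$ with positive bases $\beta$. By the standard iterated-Rolle (Chebyshev-system) argument such an exponential sum has at most $(\#\text{terms})-1$ real zeros; in particular $f_{GH}$ vanishes at only finitely many even degrees. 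Thus every pair is separated by all but finitely many even degrees, and the growth condition $\lambda_{\max}(G)>1$ guarantees that the dominant base exceeds $1$, so $f_{GH}(2q)$ is eventually of constant sign — the coincidences cannot accumulate near the top of the admissible range.

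Next I would pick the two degrees greedily. Take $k_1^*$ to be the largest even integer $\le m$; by Lemma \ref{lemma:bounds_for_kappa} and $\lambda_{\max}>1$ this degree is governed by the largest absolute eigenvalues and already separates every pair whose $[1,\infty)$-parts $|\sigma_{[1,\infty)}(\cdot)|$ differ at the top. Let $\mathcal P\subseteq\binom{\Gamma}{2}$ be the finite set of pairs that $k_1^*$ fails to separate. Each $(G,H)\in\mathcal P$ excludes only finitely many even degrees (its zero set from the step above), so the union of excluded degrees over $\mathcal P$ is finite; choosing $k_2^*\le k_1^*$ outside this union — and relying on the fact that, by Remark \ref{remark_1}\,\ref{remark:1a}), no spectral information is lost by staying within $k\le m$ — completes the construction. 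A pair is then separated by the joint map as soon as it is separated by $k_1^*$ or by $k_2^*$, which by construction holds for every pair.

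The hard part will be the quantitative bookkeeping that keeps \emph{both} degrees inside the range $k\le m$ while still covering \emph{every} pair: the exponential-sum bound only limits the number of bad degrees \emph{per pair}, whereas $|\Gamma|$ (hence the number of pairs) may be large, so a naive union of excluded degrees could in principle exhaust the $\lfloor m/2\rfloor$ available even degrees. Making step three rigorous therefore requires comparing $\binom{\lfloor m/2\rfloor}{2}$ against the number of $2$-subsets of degrees that are simultaneously bad for some pair — a pigeonhole estimate that must be fed by a sharp bound on the number of terms in each $f_{GH}$, i.e. on the number of distinct absolute eigenvalues. I would also treat the borderline eigenvalues $|\lambda_i|=1$ separately, since they contribute the constant $1$ to every even-degree Newton polynomial and are invisible to the growth argument; the split of $|\sigma(G)|$ into $|\sigma_{[0,1)}(G)|$ and $|\sigma_{[1,\infty)}(G)|$ introduced before the theorem is exactly what isolates them.
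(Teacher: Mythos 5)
Your proposal follows essentially the same route as the paper's proof: the same split of the absolute spectrum into $|\sigma_{[1,\infty)}(G)|$ and $|\sigma_{[0,1)}(G)|$, the same dominance argument (the contribution of eigenvalues of modulus $<1$ dies off, so a high even degree $k_1^*$ separates graphs with distinct top parts), the same use of the sub-unit eigenvalues to choose $k_2^*$ for the remaining pairs, and the same appeal to Lemma \ref{lemma:cospectral_equivalency}~\ref{lemma:cospectral_equivalency_c}) and Remark \ref{remark_1}~\ref{remark:1a}) for the bound $k_i^*\le m$. What you add --- viewing each difference $N(A_G,2q)-N(A_H,2q)$ as a nonzero exponential sum $\sum_{\beta} d_\beta\,\beta^{\,q}$ with finitely many real zeros in $q$ --- is a genuine sharpening that makes the phrase ``for $k$ large enough'' precise and quantifiable.

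However, the step you yourself flag as ``the hard part'' is a genuine gap, and you do not close it. Two concrete problems. First, your choice of $k_1^*$ as the largest even integer $\le m$ is unjustified: the dominance of $|\sigma_{[1,\infty)}(G)|$ only sets in once $k$ exceeds a threshold depending on how close the sub-unit eigenvalues are to $1$ and on the gaps between the distinct top spectra, and nothing in the argument forces this threshold to be $\le m$. Second, even granting per-pair finiteness of bad degrees, the union over all unseparated pairs of excluded even degrees could in principle cover all of $\{2,4,\ldots,k_1^*\}$, so the existence of an admissible $k_2^*\le k_1^*\le m$ does not follow; the pigeonhole repair you mention is only sketched, not carried out. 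You should be aware that the paper's own proof has exactly the same weakness: it asserts the existence of $k_1^*$ and $k_2^*$ from the limiting behaviour and then dispatches the bound $\le m$ with a one-line appeal to Lemma \ref{lemma:cospectral_equivalency}~\ref{lemma:cospectral_equivalency_c}) and Remark \ref{remark_1}~\ref{remark:1a}), which only guarantees that each non-cospectral pair differs in \emph{some} Newton polynomial of degree $\le m$ (possibly odd, and possibly a different degree for each pair), not that two fixed even degrees work simultaneously for all of $\Gamma$. So your attempt reproduces the paper's argument faithfully, improves one step, and honestly exposes --- without repairing --- the step that the paper glosses over.
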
 

\begin{proof}
Ordering all absolute spectra $|\sigma(G)|$ lexicographically yields a unique order on the set $\Gamma$. Then all graphs $G$ from $\Gamma$ can be distinguished either by $|\sigma_{[1,\infty)}(G)|$ or by $|\sigma_{[0,1)}(G)|$.

In the first case, let us consider all $G\in \Gamma$ with distinct $|\sigma_{[1,\infty)}(G)|$. The sum $\sum_{\lambda\in |\sigma_{[0,1)}(G)|}|\lambda|^k$ converges to 0 for $k\rightarrow \infty$, so its influence on the Newton polynomials $N(A_G,k)$ for $k$ large enough can be neglected. Since $|\sigma_{[1,\infty)}(G)|$ is unique for all considered graphs $G$ there must exist a degree $k_1^*$ such that the values of $N(A_G,k)$ for all even $k\geq k_1^*$ are distinct. 

Now if some graphs $G\in\Gamma$ have the identical part of absolute spectrum $|\sigma_{[1,\infty)}(G)|$, they must differ within the part   $|\sigma_{[0,1)}(G)|$. Hence,  there must always exist a degree $k_2^*\leq k_1^*$ such that the sum $\sum_{\lambda\in|\sigma_{[0,1)}(G)|}|\lambda|^{k_2^*}$ distinguishes  these graphs.  

Lemma \ref{lemma:cospectral_equivalency} \ref{lemma:cospectral_equivalency_c}) and Remark \ref{remark_1} \ref{remark:1a}) yield the number of vertices in the graph $G$ as an upper bound for $k_1^*$ and $k_2^*$.
\end{proof}

\begin{corollary}
Assuming Conjecture \ref{con:cospectral} to be true, all $C_n$-isomers can be uniquely characterized by at most two Newton polynomials $N(A_G,k_1^*)$ and $N(A_G,k_2^*)$, $k_1^*\leq k_2^*$ even, with respect to at least one of the graphs $G\in\lbrace T_n, T_n^5,T_n^6\rbrace$.
\end{corollary}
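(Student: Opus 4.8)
\section*{Proof proposal for the Corollary}

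The plan is to obtain the corollary by applying Theorem~\ref{theorem:main_general} to the family of facet graphs of the $C_n$-isomers, with Conjecture~\ref{con:cospectral} supplying the hypothesis that the spectra separate the isomers. Fix a feasible $n$, and for a fixed graph type $G\in\{T_n,T_n^5,T_n^6\}$ let $\Gamma$ be the set of graphs $G(P)$ as $P$ ranges over all $C_n$-isomers. By Conjecture~\ref{con:cospectral} c) at least one such type has the property that the map $P\mapsto\sigma(G(P))$ is injective; I would fix that type. Since distinct combinatorial isomers are non-isomorphic by definition, injectivity of the spectrum means the graphs in $\Gamma$ are pairwise non-cospectral, and they all share the same number of vertices $m$ (equal to $n/2+2$, $12$ or $n/2-10$ according to the type). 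This gives the first hypothesis of the theorem.

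Next I would verify $\lambda_{\max}(G)>1$ for every $G\in\Gamma$ using Lemma~\ref{lemma:bounds_for_kappa}. Here Conjecture~\ref{con:cospectral} a) is convenient: for every feasible $n\neq44$ it lets me take the separating type to be $T_n$, which is $3$-regular, so Lemma~\ref{lemma:bounds_for_kappa} a) gives $\lambda_{\max}(T_n)=3>1$ uniformly over $\Gamma$. The only remaining case is $n=44$, where Table~\ref{tab:cospectral} shows that neither $\sigma(T_{44})$ nor $\sigma(T_{44}^5)$ is injective on isomers, so Conjecture~\ref{con:cospectral} c) forces the separating type to be $T_{44}^6$; there a finite verification over the $C_{44}$-isomers shows that every $T_{44}^6(P)$ has a vertex of valency at least two, hence contains an induced path $P_3$ or triangle, and Lemma~\ref{lemma:bounds_for_kappa} c) yields $\lambda_{\max}(T_{44}^6(P))\ge\sqrt2>1$. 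This choice deliberately avoids ever selecting $T_n^5$ as the separating graph, which could be fatal: for an IPR-isomer $T_n^5$ consists of $12$ isolated vertices with $\lambda_{\max}=0$.

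The genuine obstacle is the gap between distinct spectra, which is all the conjecture delivers, and distinct \emph{absolute} spectra, which is what Theorem~\ref{theorem:main_general} requires. Two Newton polynomials of even degree depend only on $|\sigma(G)|$, because $(+a)^{k}$ and $(-a)^{k}$ coincide for even $k$; hence no pair of even Newton polynomials can ever separate two graphs sharing the same absolute spectrum, and the conclusion of the corollary silently presupposes that the chosen absolute spectra are already distinct. Injectivity of the signed spectrum does not force this: a priori two non-isomorphic isomer graphs could agree on every even power sum $\sum_i|\lambda_i|^{k}$ while differing only in the signs of some $\pm\lambda$ pair, i.e. on an odd power sum. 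I expect closing this to be the main technical hurdle, and I would address it either by strengthening Conjecture~\ref{con:cospectral} so that the relevant absolute spectrum is unique across isomers, or by proving a structural lemma to the effect that non-isomorphic $C_n$-isomer graphs of the chosen type are never sign-cospectral without being cospectral, thereby upgrading distinctness of $\sigma(G)$ to distinctness of $|\sigma(G)|$.

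Once distinct absolute spectra and $\lambda_{\max}(G)>1$ are secured, Theorem~\ref{theorem:main_general} applies directly to $\Gamma$ and returns two even degrees, bounded above by $m$, whose Newton polynomials take pairwise distinct values on $\Gamma$. Relabelling them to respect the ordering $k_1^*\le k_2^*$ stated in the corollary, the two numbers $N(A_G,k_1^*)$ and $N(A_G,k_2^*)$ characterise every $C_n$-isomer with respect to the chosen graph $G\in\{T_n,T_n^5,T_n^6\}$, which is precisely the assertion to be proved.
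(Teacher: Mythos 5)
Your proposal follows the same route as the paper: the paper's entire proof is the one-line observation that the claim ``follows from Theorem~\ref{theorem:main_general} and Lemma~\ref{lemma:bounds_for_kappa}, since $\lambda_{\max}(G)>1$ for at least one of the graphs.'' Your elaboration of the $\lambda_{\max}>1$ hypothesis is in fact more careful than the paper's: you correctly note that the bound is needed for the \emph{separating} graph type, not merely for some graph type, and that $T_n^5$ would be fatal for IPR isomers ($\lambda_{\max}=0$); your use of Conjecture~\ref{con:cospectral}~a) to select the $3$-regular $T_n$ for $n\neq 44$ and Table~\ref{tab:cospectral} to force $T_{44}^6$ in the remaining case is exactly the right way to discharge this, and it is a detail the paper leaves implicit.

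The gap you flag between distinct spectra and distinct \emph{absolute} spectra is genuine, and you should be aware that it is a gap in the paper's own proof rather than a defect you have introduced. Conjecture~\ref{con:cospectral} asserts injectivity of $P\mapsto\sigma(G(P))$, while Theorem~\ref{theorem:main_general} is stated for families with pairwise distinct $|\sigma(G)|$; since even power sums $\sum_i\lambda_i^{k}=\sum_i|\lambda_i|^{k}$ are functions of the absolute spectrum alone, two isomers whose graphs were sign-cospectral but not cospectral could never be separated by even Newton polynomials, so the corollary as stated really does require the stronger hypothesis. The paper silently identifies the two notions and offers no argument that distinct spectra imply distinct absolute spectra for these graph families. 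Your two suggested repairs (strengthening the conjecture to absolute spectra, or proving that non-isomorphic isomer graphs of the chosen type cannot agree on all even power sums while disagreeing on an odd one) are the natural ways to close this; neither is carried out in the paper. In short: your proposal is faithful to the paper's intended argument, supplies the missing verification of the eigenvalue hypothesis, and correctly isolates the one step that neither you nor the paper actually proves.
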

\begin{proof}
This follows from Theorem \ref{theorem:main_general} and Lemma \ref{lemma:bounds_for_kappa}, since $\lambda_{\max}(G)>1$ for at least one of the graphs $G\in\lbrace T_n T_n^5,T_n^6\rbrace$.
\end{proof}

\begin{remark}
\begin{enumerate}[(i)]

\item Note that the set of $C_n$--isomers with distinct largest eigenvalues $\lambda_{max}(G)$ for some $G\in\lbrace T_n T_n^5,T_n^6\rbrace$ has distinct sets $|\sigma_{[1,6]}(G)|$.

\item It is important to stress that we consider even degrees $k\le m$ only. In general, values of Newton polynomials $N(A_G,k)$ with odd degrees $k$  do not distinguish between graphs $G$.  To illustrate this point, consider the hexagon graphs of two specific isomers of $C_{28}$. The graph $T_{28,1}^6$ of the first isomer consists of four isolated hexagons and the other graph $T_{28,2}^6$ of two pairs of two adjacent hexagons. One gets the following spectra:
\begin{align*}
\sigma(T_{28,1}^6)=\lbrace 0, 0 ,0,0\rbrace, ~~\sigma(T_{28,2}^6)=\lbrace -1,-1,1,1\rbrace.
\end{align*}
It follows directly that Newton polynomials of any odd degree do not distinguish between $C_{28,1}$ and $C_{28,2}$, but  $N(A_{28,1}^6,k)\not=N(A_{28,2}^6,k)$ for every even $k\geq 2$.
\end{enumerate}

\end{remark}

\begin{figure}[H]    								
\centering 	
\subfigure[Dual graph $T_{44}$ of $C_{44,37}$]{									
\includegraphics[scale=0.15]{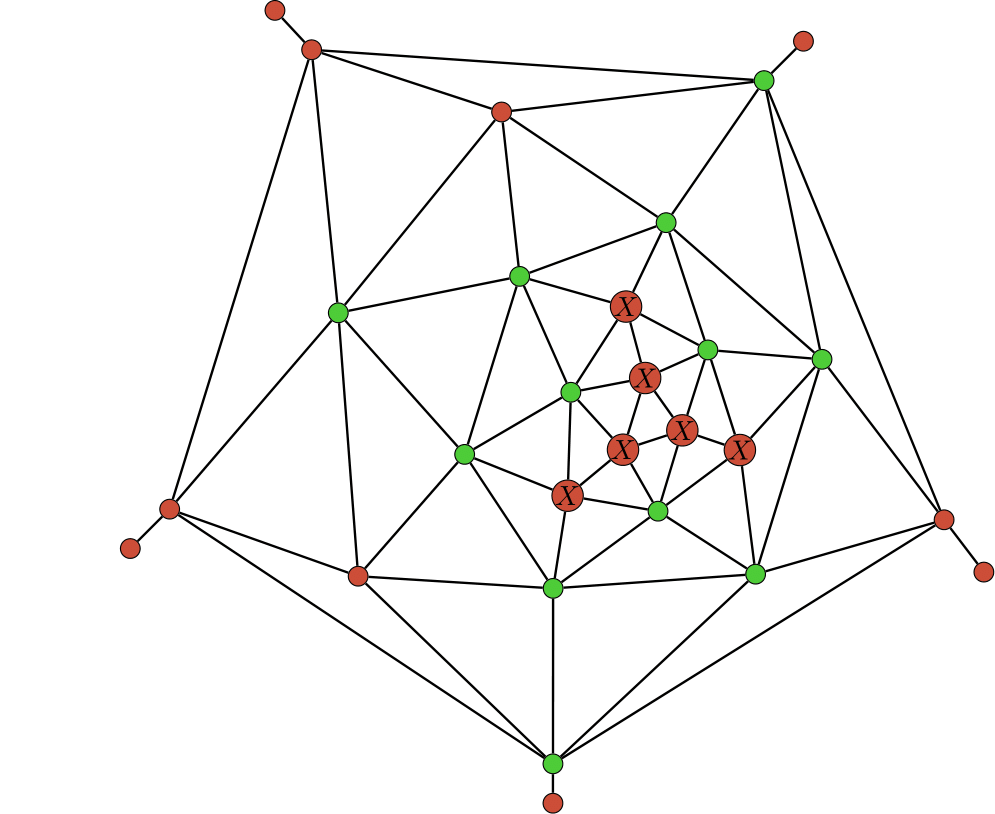}
\label{fig:C_44_37} 	
}	
\subfigure[Dual graph $T_{44}$ of $C_{44,38}$] {
\includegraphics[scale=0.15]{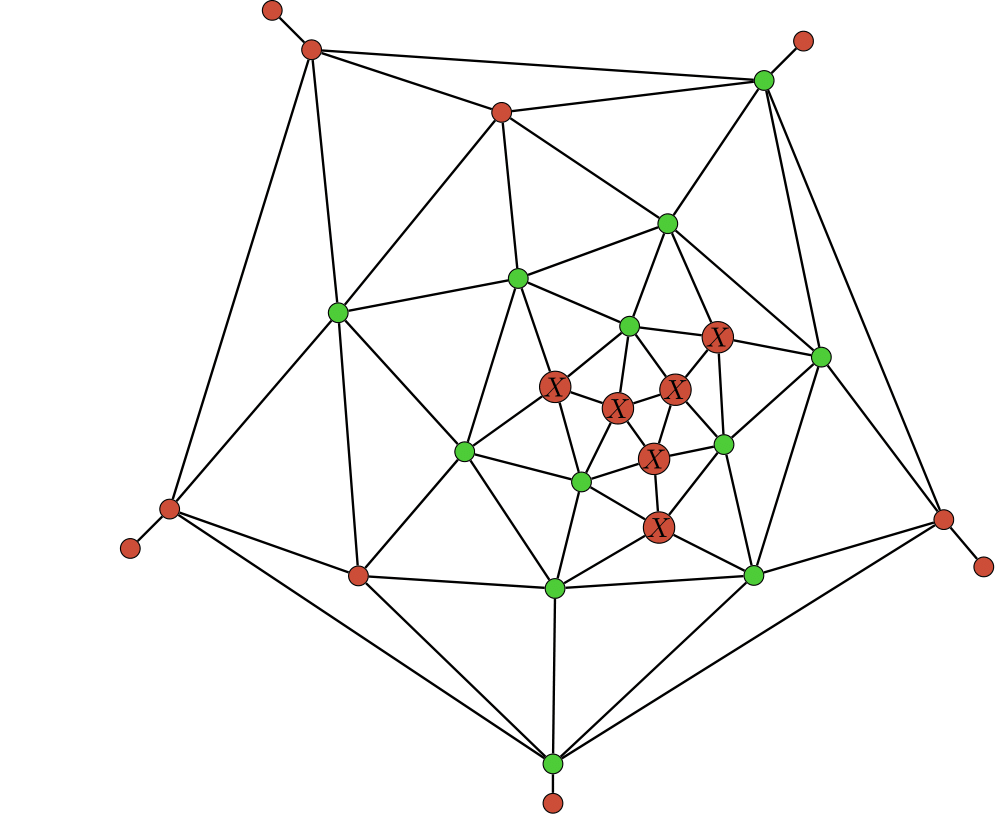}
\label{fig:C_44_38}
}
\\
\subfigure[Dual graph $T_{60}$ of $C_{60,4}$]{									
\includegraphics[scale=0.15]{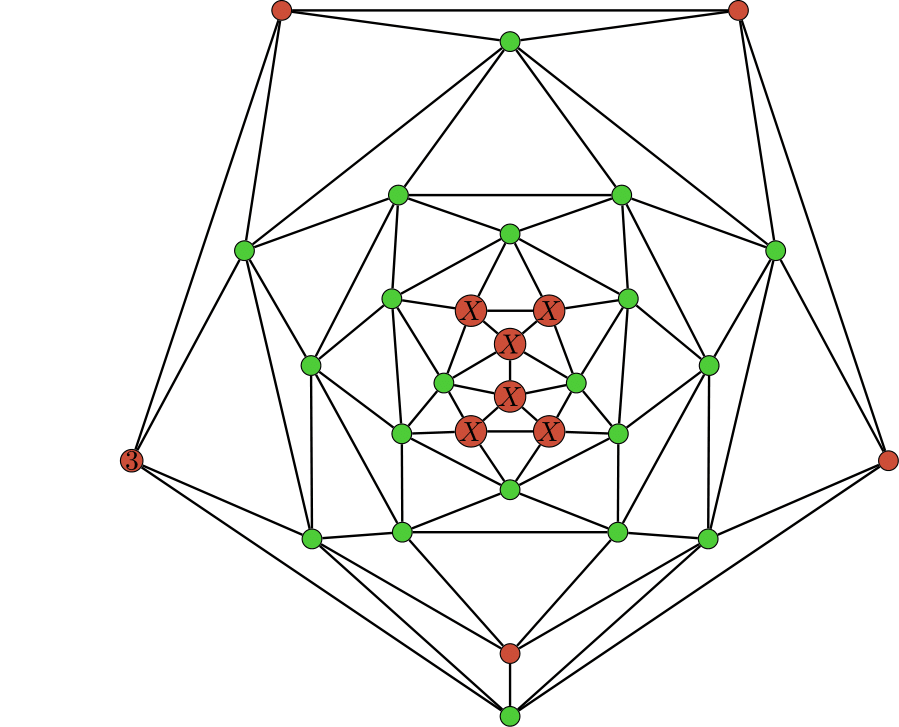}
\label{fig:C_60_4} 	
}	
\subfigure[Dual graph $T_{60}$ of $C_{60,5}$] {
\includegraphics[scale=0.15]{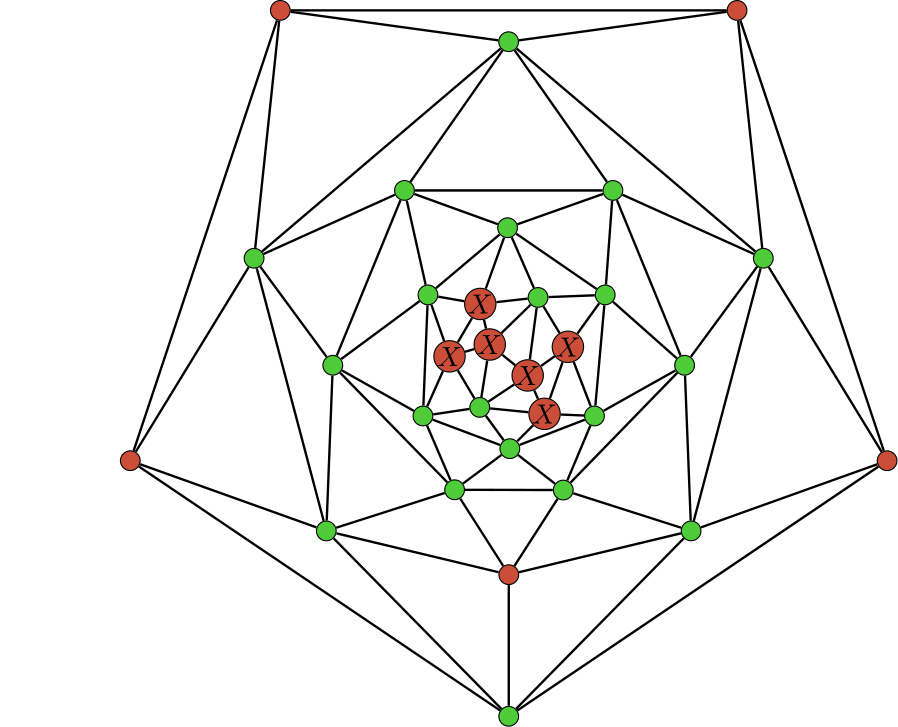}
\label{fig:C_60_5}
}
\caption{Two examples of pentagon-fragment (marked with an X) flipped isomers. $C_{44,37}$ and $C_{44,38}$ are cospectral, but $C_{60,4}$ and $C_{60,5}$ are not.}\label{fig:C44}
\end{figure}

\section{Spectral classification and stability prediction}
\label{sec:spectral_classification_and_stability_prediction}
In this section,  we decompose the family of all $C_n$--isomers into subsets, which we call \textit{clusters}, using the Newton polynomials $N(A_G,k)$, $k=2,4,6,\ldots,k^*$ with adjacency matrix $A_{G}$. This approach can be applied to any graph $G\in\lbrace T_n,T_n^5,T_n^6\rbrace$  with no cospectral isomers.

\begin{definition}\label{def:cluster}
\begin{enumerate}[a)]
\item For a feasible $n$ and a given even integer $k$, we call a set of isomers with same value $N(A_G,k)$  of Newton polynomial of degree $k$ a cluster of $C_n$. For a fixed $n$,  the family of all these clusters is called a clusterization of $C_n$. A clusterization such that its every cluster has exactly one element is called a complete clusterization. 
\item We define $k_{single}^*$ as the minimal degree $k$ of Newton polynomials which is needed for a complete clusterization.
\end{enumerate}
\end{definition}

As we have seen in Lemma \ref{lemm:Newton} \ref{Lemma_1_b}), the Newton polynomial of degree two and three is equal to twice the number of edges and six times the number of triangles in the considered graph. The interpretation of $N(A_n,4)$ is a bit more complex. We have to count all possible cycles of length four. 
 In Figure \ref{fig:cycles_4}, the idea of calculation of $N(A_{60},4)$ is illustrated on  Buckminster fullerene $C_{60, 1812}$.  There, all five possible cycles of length four together with their frequencies are listed for one pentagon in $C_{60, 1812}$.

\begin{figure}[H]
\subfigure[A fragment from Buckminster fullerene]{
\includegraphics[width=0.16\textwidth]{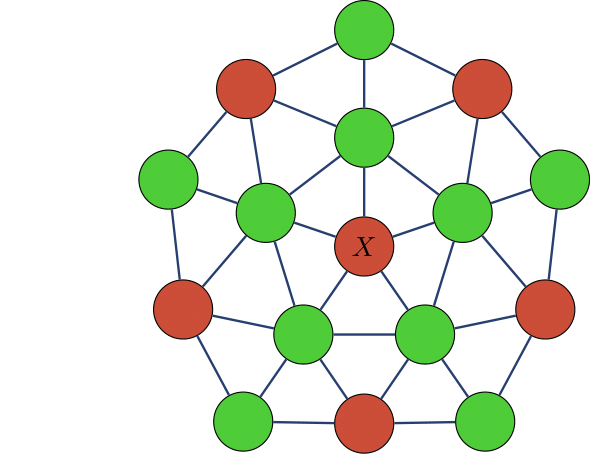}\label{fig:frag}
}\hfill
\subfigure[Appears five times in (a)]{
\hspace*{0.5cm}\includegraphics[width=0.06\textwidth]{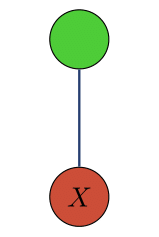}\hspace*{0.5cm}
}\hfill
\subfigure[Appears 25 times in (a)]{
\hspace*{0.5cm}\includegraphics[width=0.05\textwidth]{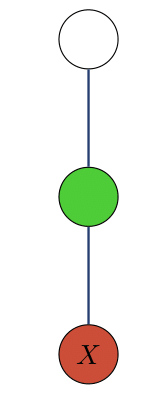}\hspace*{0.5cm}
}\hfill
\subfigure[Appears 20 times in (a)]{
\hspace*{0.5cm}\includegraphics[width=0.05\textwidth]{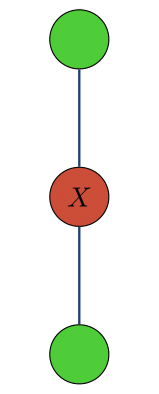}\hspace*{0.5cm}
}\hfill
\subfigure[Appears ten times in (a)]{
\hspace*{0.5cm}\includegraphics[width=0.1\textwidth]{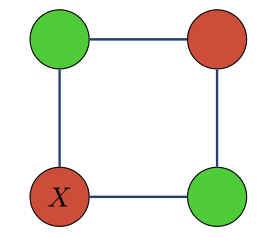}\hspace*{0.5cm}
}\hfill
\subfigure[Appears ten times in (a)]{
\hspace*{0.5cm}\includegraphics[width=0.1\textwidth]{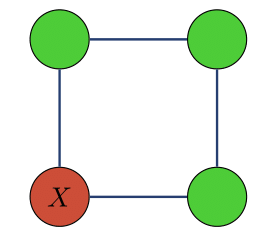}\hspace*{0.5cm}
}
\caption{Five ((b)-(f)) different kinds of cycles of length four beginning and starting in the central vertex (X) in (a)}\label{fig:cycles_4}
\end{figure}   

For the graph $T_n$,  one can show that the sum of these frequencies over all vertices in $T_{n}$  only depends on $n$, cf. \cite{BBS_2}. So, $N(A_n,k)$ for $k\leq 4$ can be neglected for the clusterization of $C_n$ on the basis of $T_n$, since these Newton polynomials have the same value for all $C_n$-isomers.

Nevertheless, for $T_n^6$ we have to consider every Newton polynomial of even degree, since in $T_n^6$ the number of vertices is fixed, but neither the number of edges between them nor the number of triangles in $T_n^6$ is determined by $n$.

In the following section we use $T_{60}^6$ in order to get a complete clusterization of $C_{60}$ as an example.

\subsection{Clusterization of $\boldsymbol{C_{60}}$ using Newton polynomials}
Recall that by Euler's formula each $C_{60}$-isomer has $90$ edges and $32$ facets with $12$  pentagons and $20$ hexagons among them. 

The Newton polynomial $N(A^6_{60},2)$ can take on 18 distinct values $60= t_1 < \ldots < t_{18}=100$. For values $t_1=60, t_2=64, t_{16}=92, t_{17}=96$ and $t_{18}=100$, there exists exactly one $C_{60}$-isomer with $N(A^6_{60},2)=t_i$, $i\in\lbrace 1,2, 16,17,18 \rbrace$. These isomers are $C_{60,1812}, C_{60,1809}, C_{60,2}, C_{60,3},C_{60,1}$, respectively. Their Schlegel diagrams and dual hexagonal graphs $T_{60}^6$ are shown in Figure \ref{fig:schlegel+dual}. Moreover, our numerical results show that for any degree $k\geq 2$ with respect to $T_{60}^6$ these isomers form a cluster with one single element. Ordering these five isomers according to $N(A^6_{60},k)$ does not change with increasing degree $k\geq 2$. In addition, Newton polynomials of all other $C_{60}$-isomers are bounded by  $N(A_{60,1809}^6,k)$ and $N(A_{60,2}^6,k)$, i.e. $N(A_{60,i}^6,k)\in\left( N(A_{60,1809}^6,k), N(A_{60,2}^6,k) \right)$ for all $i\notin\lbrace 1,2,3,1809,1812 \rbrace$ and all $k\geq 2$. 

\begin{figure}[H]
\subfigure[$C_{60,1}$]{
\includegraphics[width=0.17\textwidth]{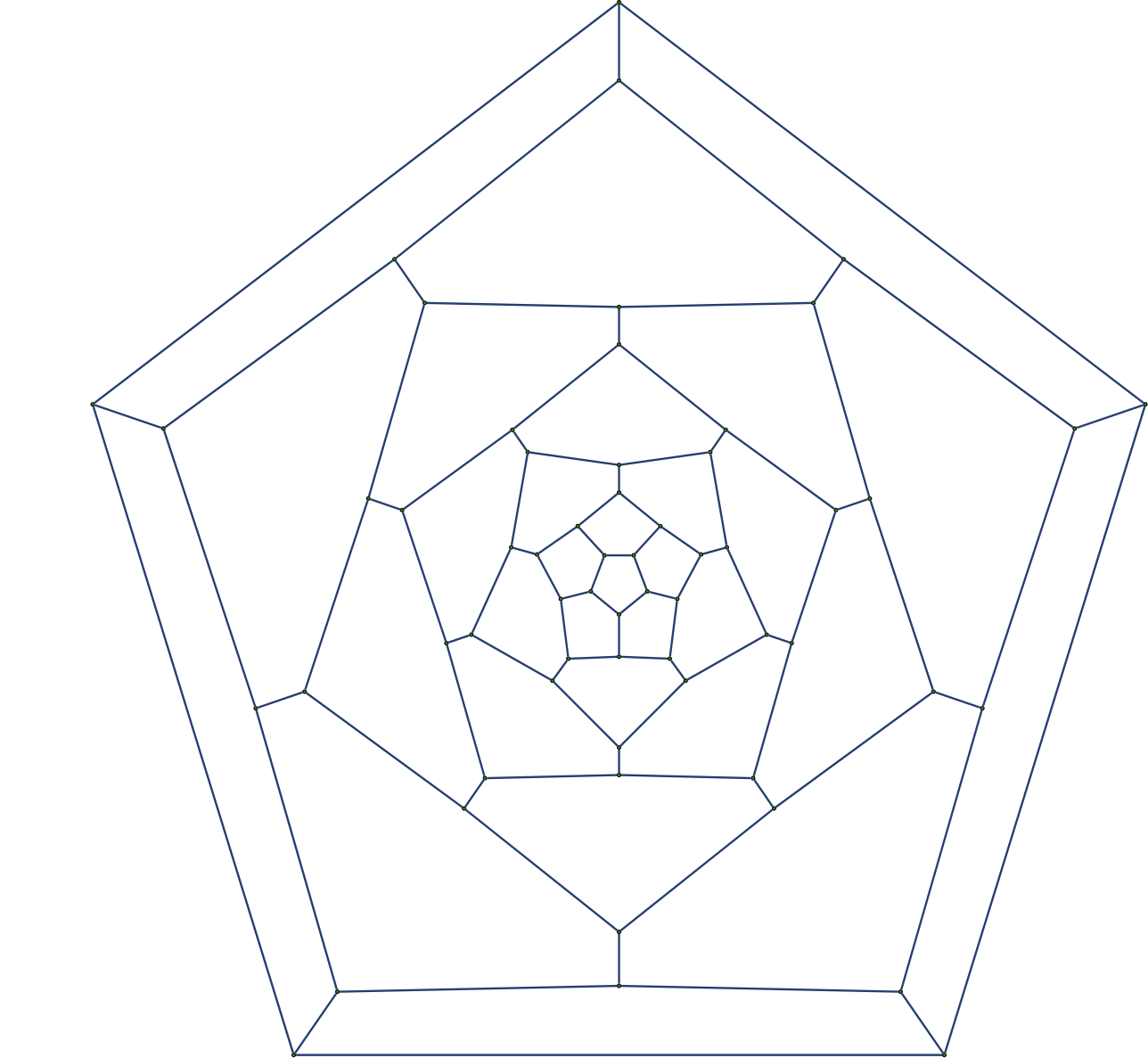}
\label{fig:C_60,1}
}
\subfigure[$C_{60,2}$]{
\includegraphics[width=0.17\textwidth]{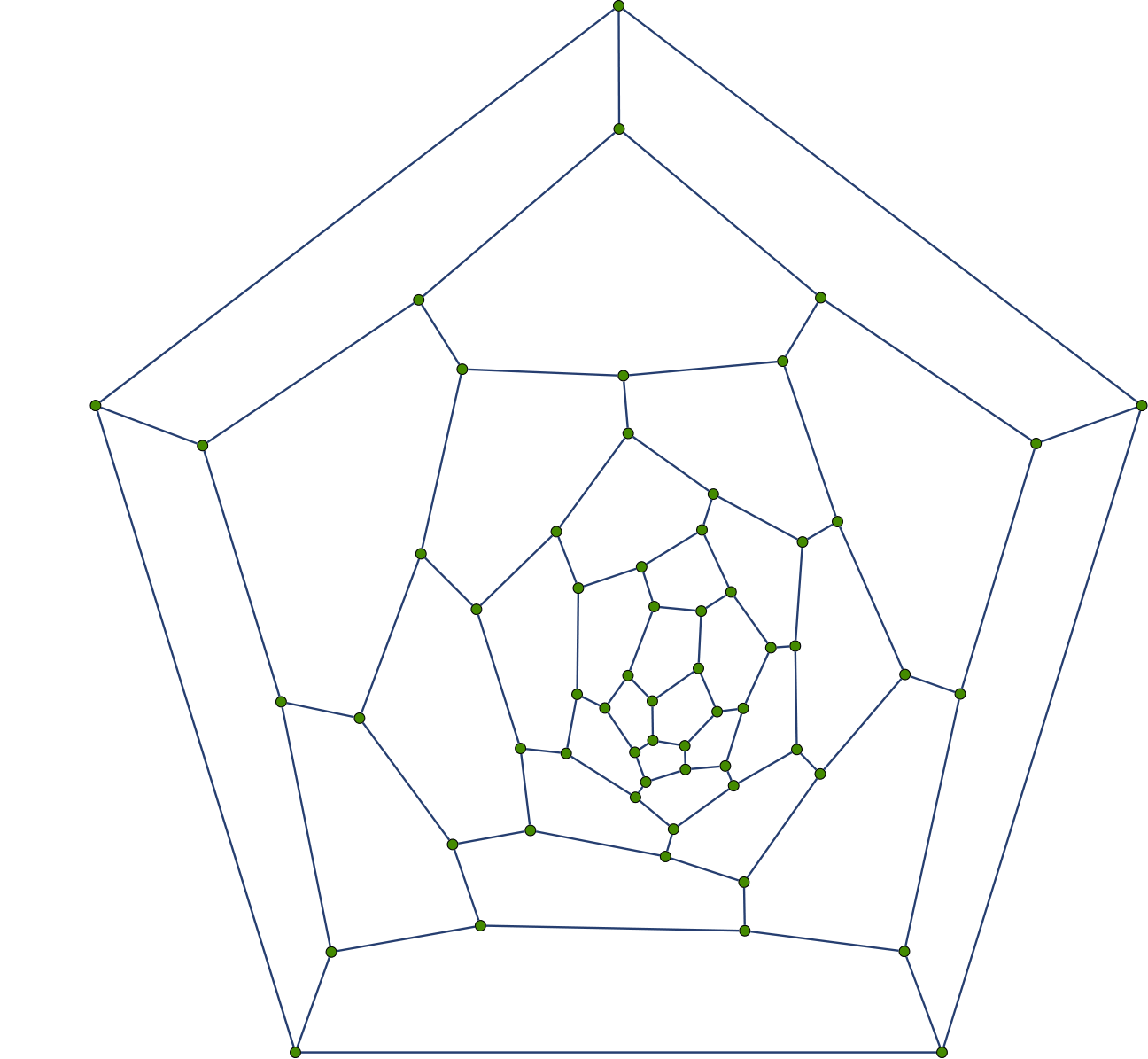}
\label{fig:C_60,2}
}
\subfigure[$C_{60,3}$]{
\includegraphics[width=0.17\textwidth]{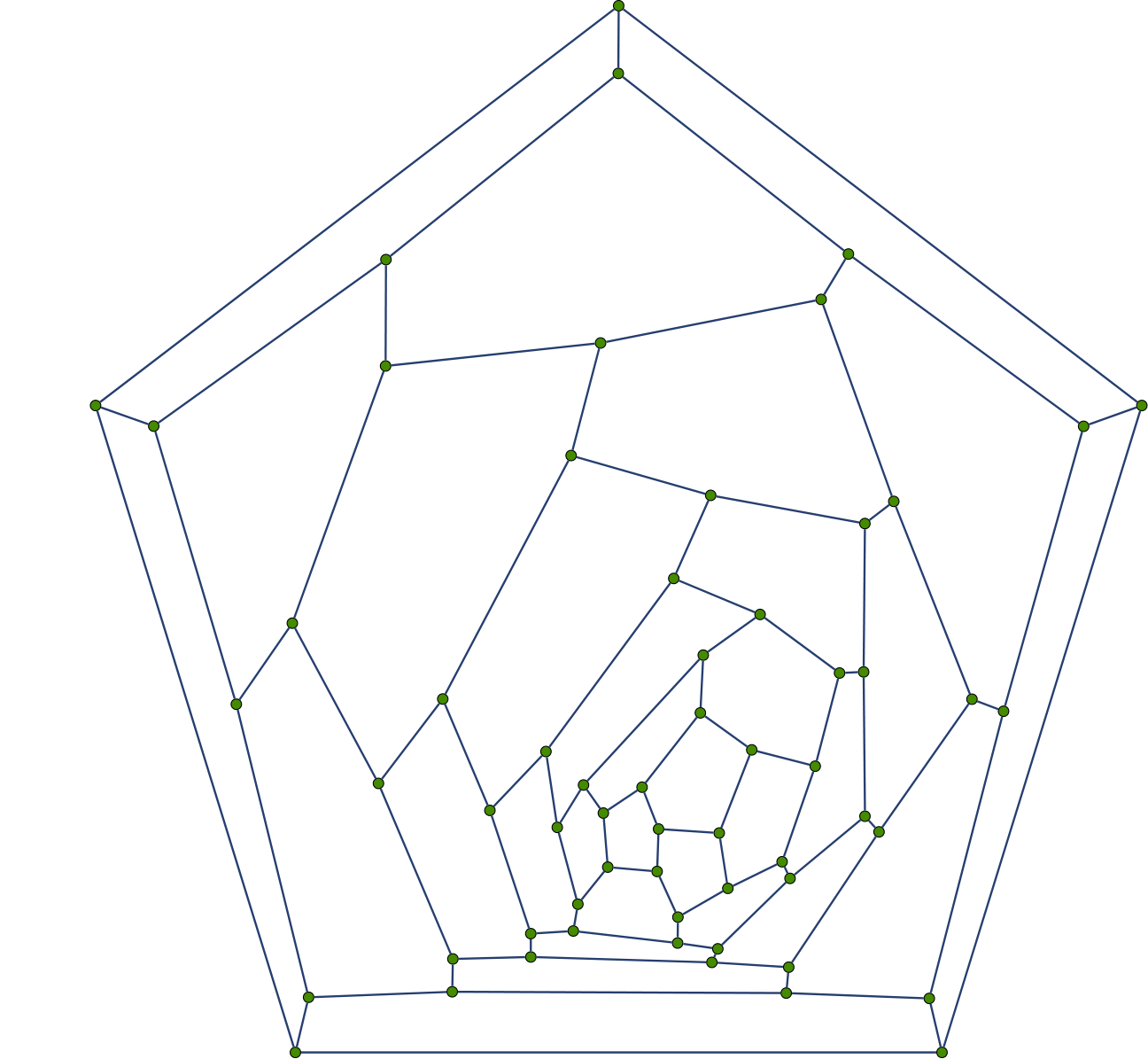}
\label{fig:C_60,3}
}
\subfigure[$C_{60,1809}$]{
\includegraphics[width=0.17\textwidth]{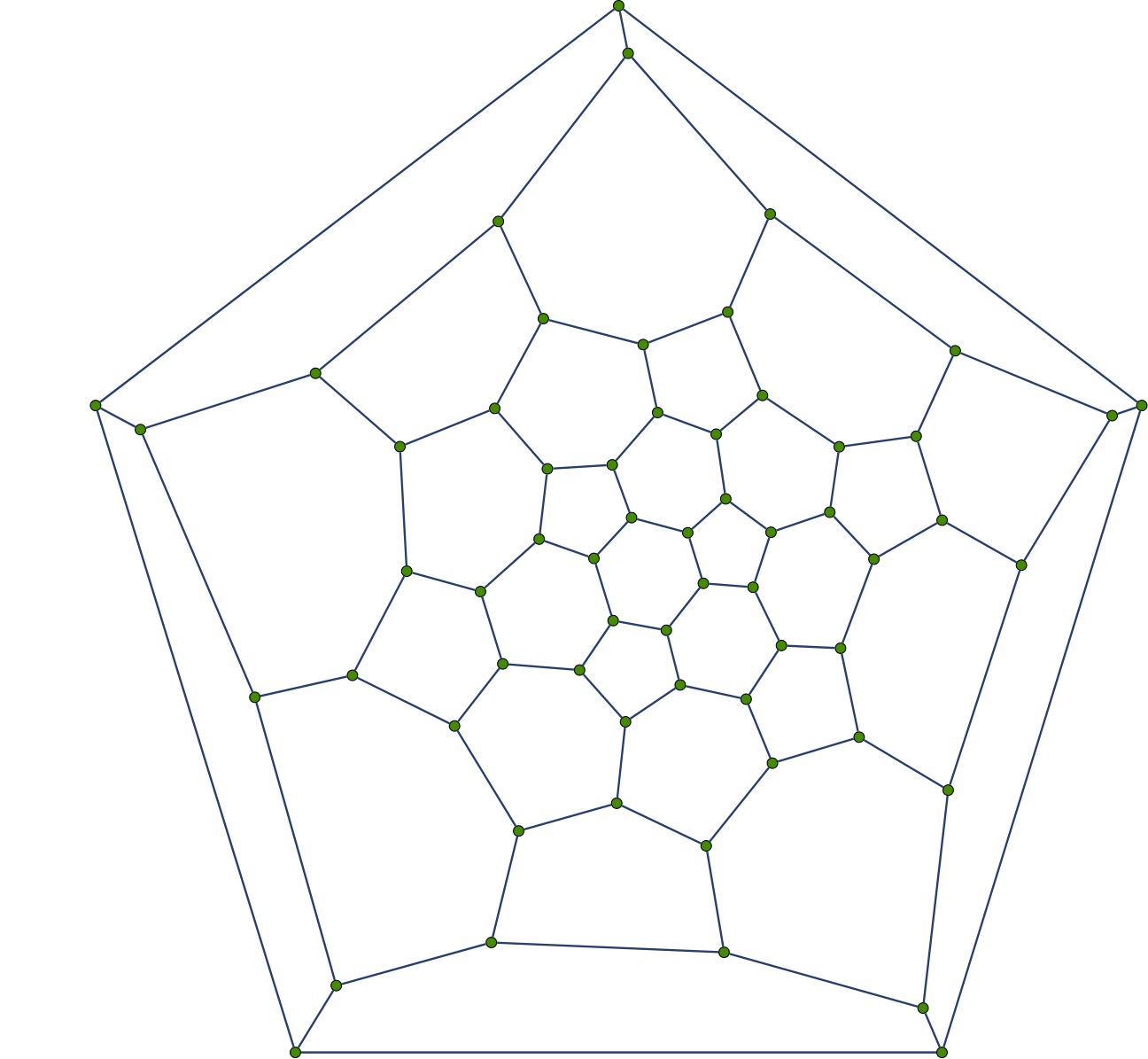}
\label{fig:C_60,1809}
}
\subfigure[$C_{60,1812}$]{
\includegraphics[width=0.17\textwidth]{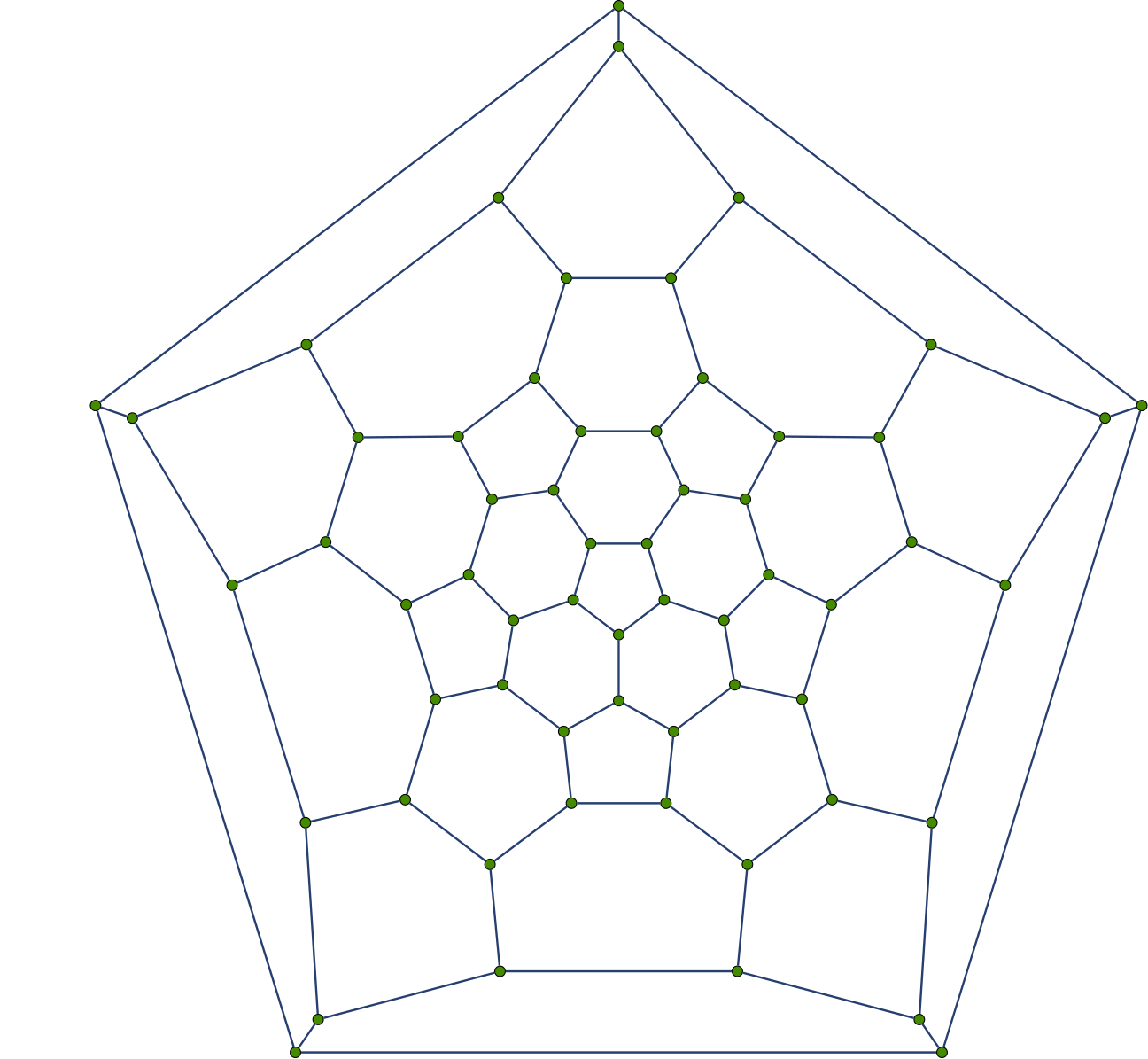}
\label{fig:C_60,1812}
}
\subfigure[$T_{60,1}^6$]{
\includegraphics[width=0.17\textwidth]{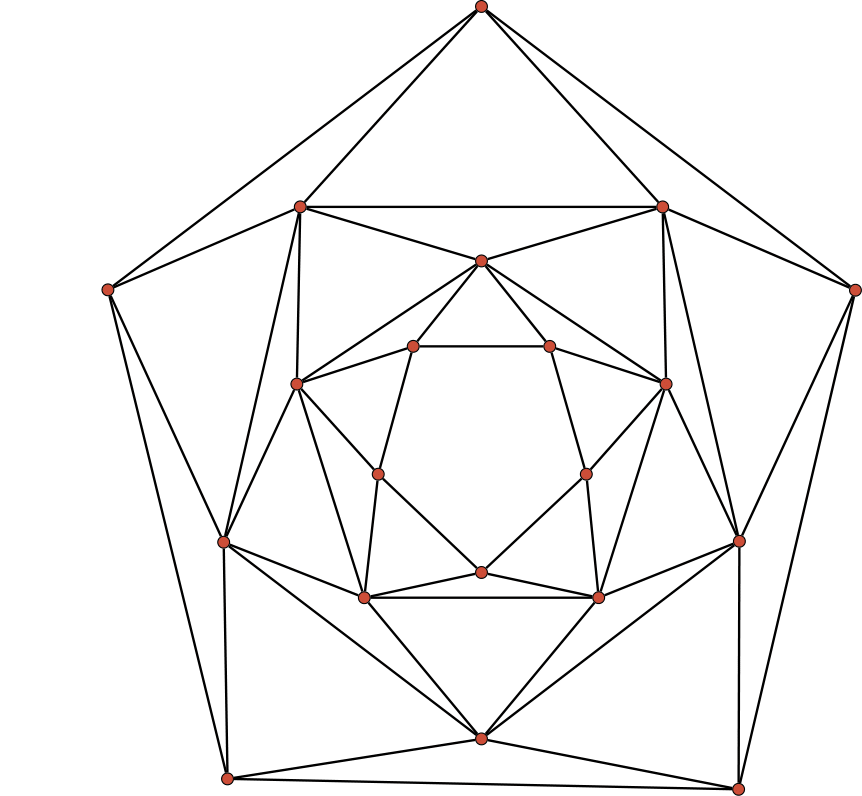}
\label{fig:Dual_Hexa C_60,1}
}
\hspace*{0.1cm}
\subfigure[$T_{60,2}^6$]{
\includegraphics[width=0.17\textwidth]{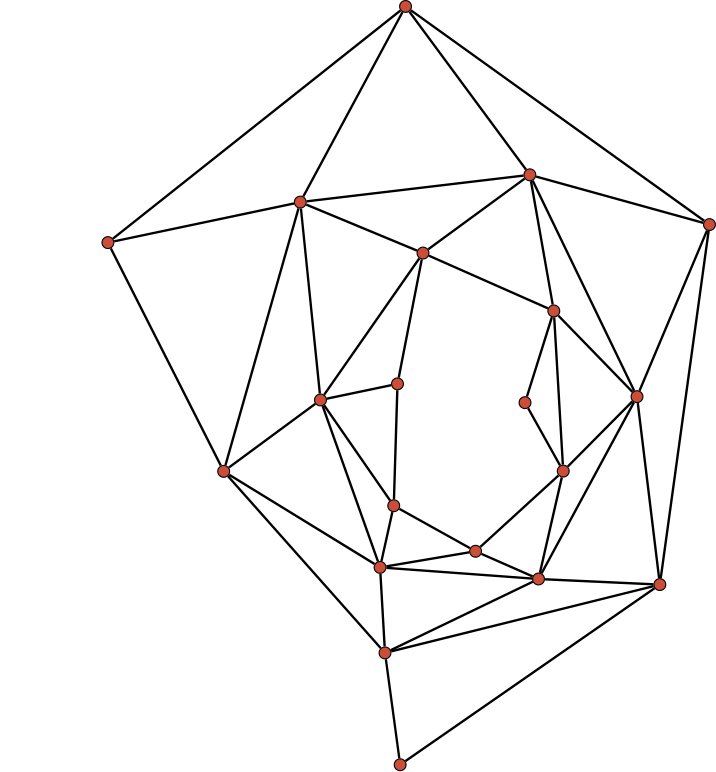}
\label{fig:Dual_Hexa C_60,2}
}
\hspace*{0.1cm}
\subfigure[$T_{60,3}^6$]{
\includegraphics[width=0.17\textwidth]{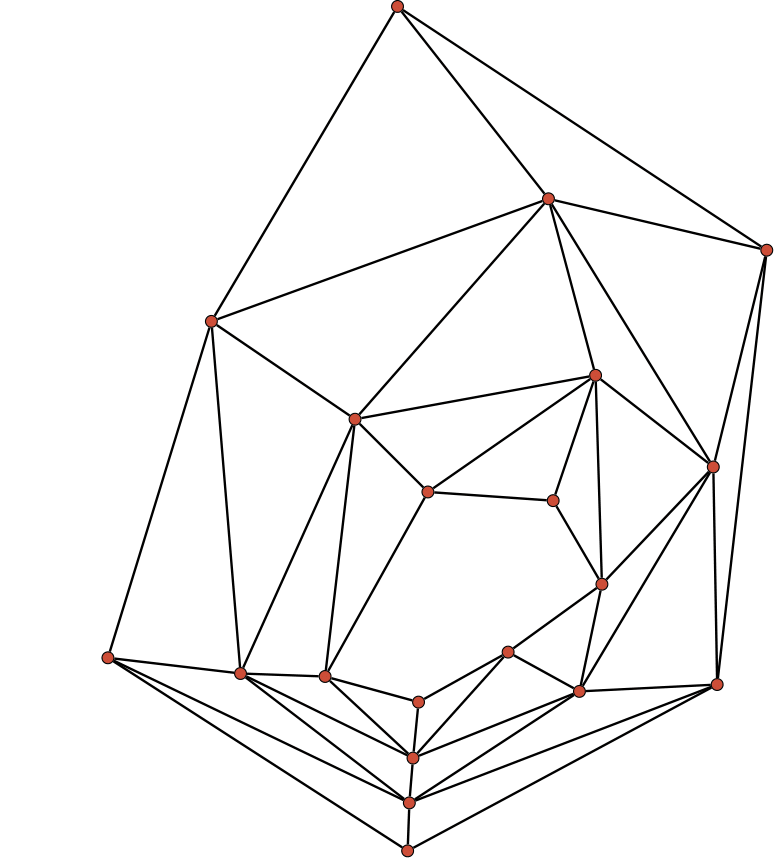}
\label{fig:Dual_Hexa C_60,3}
}
\hspace*{0.1cm}
\subfigure[$T_{60,1809}^6$]{
\includegraphics[width=0.17\textwidth]{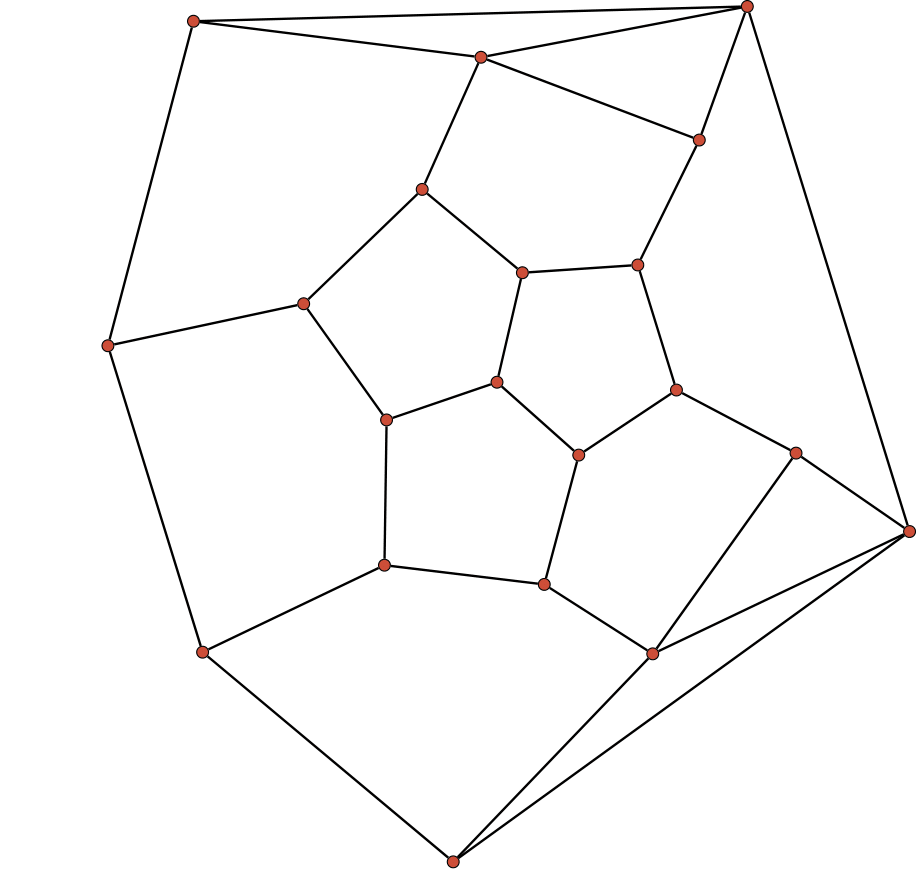}
\label{fig:Dual_Hexa C_60,1809}
}
\hspace*{0.1cm}
\subfigure[$T_{60,1812}^6$]{
\includegraphics[width=0.17\textwidth]{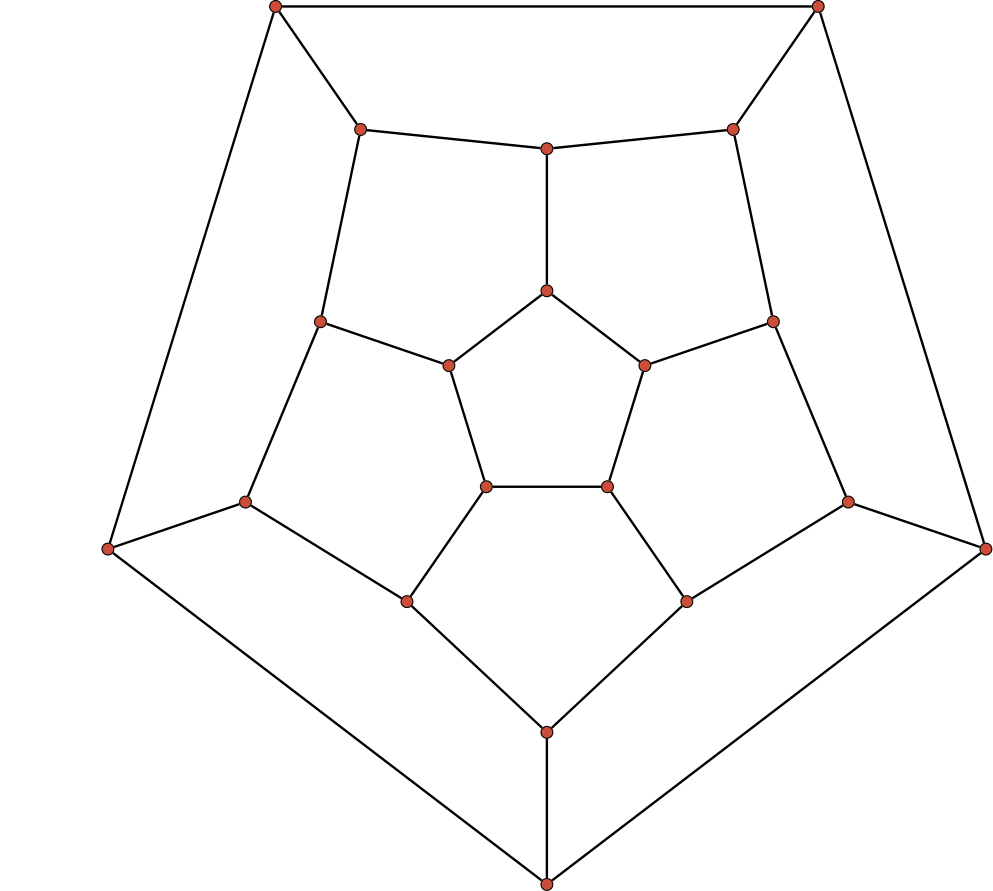}
\label{fig:Dual_Hexa C_60,1812}
}
\hspace*{0.1cm}
\caption{Schlegel diagrams and dual graphs of hexagons of five $C_{60}$-isomers}
\label{fig:schlegel+dual}
\end{figure}

We checked that for any pair of two $C_{60}$-isomers with distinct Newton polynomials of degree $k$ the Newton polynomials $N(A_{60}^6,\tilde{k})$ with $k\leq \tilde{k}\leq 100$ are distinct as well. So, it holds $k_1^*=k_2^*=k_{single}^*=12$, where $k_1^*$ and $k_2^*$ are from Theorem \ref{theorem:main_general}. One can observe that the number of distinct Newton polynomials and so the number of clusters with one element is monotone growing with $k$. Numbers of clusters and clusters with one element for all even $2\leq k\leq 100$ are listed in Table \ref{tab:Hist NewPoly}.

\begin{table}[H]    								
\centering
\begin{tabular}{ r || c | c | c | c | c | c | c | c}
$k$ & 2 & 4 & 6 & 8 & 10 & 12 & $\hdots$ & 100   \\
\hline
\# Clusters & 18 & 218 & 1233 & 1784 & 1807 & 1812 & $\hdots$ &1812 \\
\hline
\# Clusters with one element & 5 & 47 & 845 & 1757 & 1802 & 1812 & $\hdots$ & 1812 \\
\end{tabular}
\caption{Number of clusters with respect to Newton polynomial $N(A_{60}^6,k)$ for even $2\leq k\leq 100$}
\label{tab:Hist NewPoly}
\end{table}

We applied the above clusterization scheme to $C_n$, $28\leq n\leq 150$ and plotted $n$ against $k^*_{single}$ in Figure \ref{fig:k*_single}. Recall that a (pessimistic)  upper bound for $k^*_{single}$ is the number of vertices in $T_n^6$, i.e. $k_{single}^*\leq m_6=\frac{n}{2}-10$ due to Lemma \ref{lemma:cospectral_equivalency}. However, the good news is that the actual growth rate  of $k^*_{single}$ is logarithmic with $n$. Using MATLAB curve fitting toolbox \cite{Matlab} we get 
\begin{equation*}
k_{single}^*(n)\approx -15.13+7.801\log\left(0.7614n-12 \right),
\end{equation*}
with a coefficient of determination $R^2=0.9499$. 

\begin{figure}[H]
\centering
\subfigure[$n\sim k_{single}^* $]{
\includegraphics[width=1\textwidth]{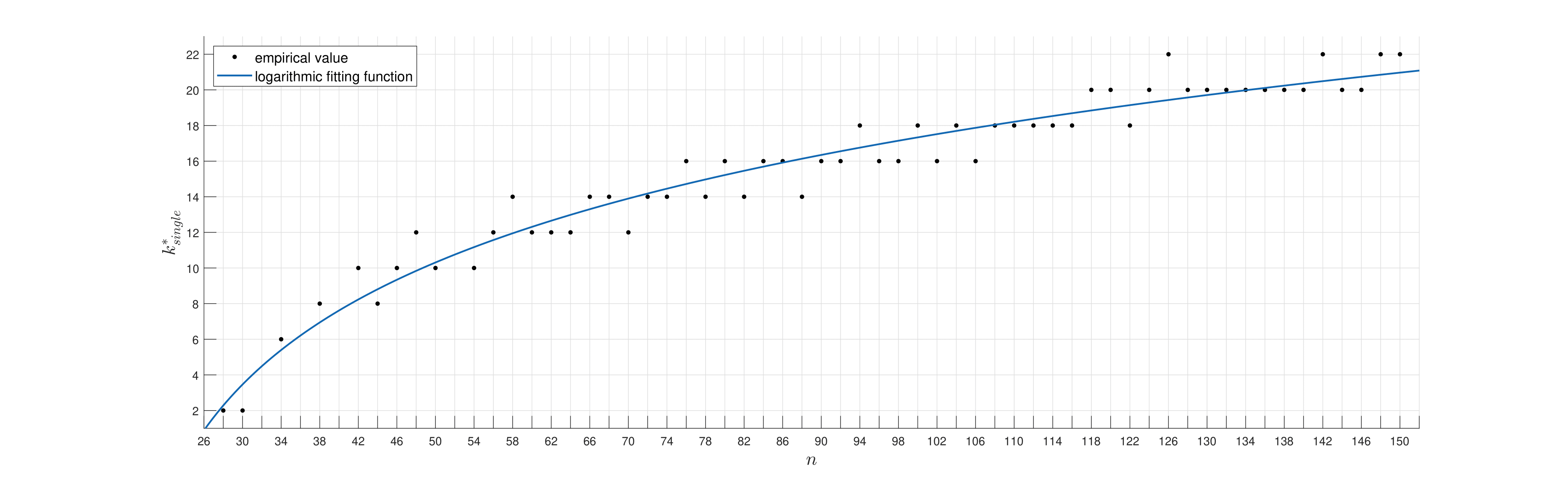}\label{fig:k*_single}
}
\subfigure[$n \sim k_{pair}^*$]{
\includegraphics[width=1\textwidth]{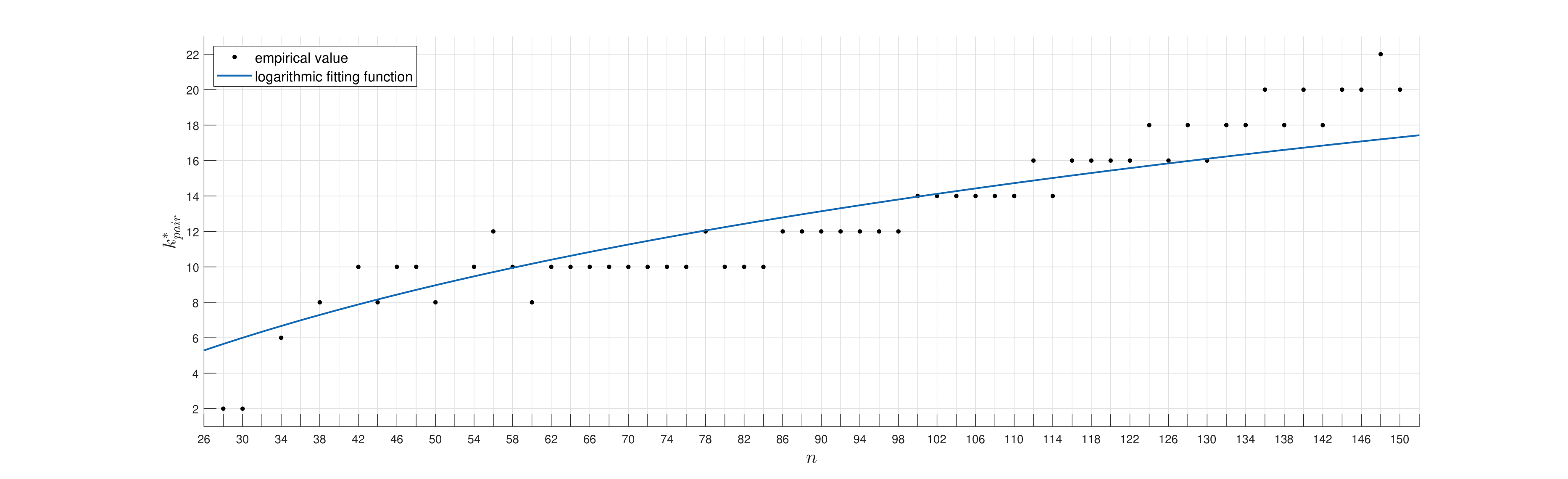}\label{fig:k*_pair}
}
\caption{Minimal degree $k_{single}^*$ (a) and $k_{pair}^*$ (b) needed for the complete clusterization of $C_n$-isomers with $28\leq n \leq 150$}\label{fig:k}
\end{figure}

Next, we use pairs of Newton polynomials $\left(N(A_n^6,k_1),N(A_n^6,k_2)\right)$ with $k_1<k_2\leq k_{single}^*$ in order to cluster all $C_{60}$-isomers. By considering additionally a second Newton polynomial of lower degree, we hope to decrease the needed degree to get a complete clusterization. We define $k_{pair}^*$ as the minimal $k_2$ such that a complete clusterization is given. Indeed, this approach decreases the needed degree significantly (compare both plots in Figure \ref{fig:k}), and therefore,  reduces computational costs. For $C_{60}$ and $T_{60}^6$ the following four tuples with $k_2 <k_{single}^*$ of degrees of Newton polynomials lead to a full classification: 

\begin{align*}
\boldsymbol{k}=(k_1,k_2)\in \big \{ (6,8),  (4,10), (6,10), (8,10) \big \}.
\end{align*}

We get $k_{pair}^*=8$. Next we plotted all values for $k_{pair}^*$ against $n$ and assumed a logarithmic function as for $k_{single}^*$. 
Using MATLAB curve fitting toolbox we get the following approximation 

\begin{align*}
k_{pair}^*(n)&\approx -24.83+10.29\log\left(0.334n+9.989 \right)
\end{align*}
with a coefficient of determination of $R^2=0.8277$.

A third hierarchical approach in order to decrease the needed degree uses a vector with all Newton polynomials up to degree $k\leq k_{single}^*$. Analogously to the first two approaches, we define $k_{hierarchical}^*$ as the minimal $k$ which yields a complete clusterization. This approach decreases e.g. the degree  for $n=78$ from $k_{pair}^*=12$ to $k_{hierarchical}^*=10$. So, this approach does not change the needed degree significantly. Nevertheless, we performed the same interpolation using MATLAB curve fitting Toolbox and got 

\begin{align*}
k_{hierarchical}^*(n)&\approx -97.05+19.83\log\left(1.466n+125.5 \right).
\end{align*}
with $R^2=0.8818$.

In \cite{BBS_2} we discuss another upper bound for $k^*_{single}$ (sharper than $m_6$) using the generalized Stone-Wales operation introduced in \cite{generalized_SW}. In the paper, we give a combinatorial interpretation of $k^*$ and derive an equation system with Newton polynomials which determines whether a fullerene with given $n$ can be constructed.

\subsection{Relative energy}\label{subsection:relative_energy}
A fullerene isomer, which can be chemically separated with a significant mass quantity and uniquely characterized, is called \textit{stable}. In order to decide which $C_{60}$-isomer can be stable the \textit{relative energy} of all of them was calculated with high-accuracy quantum chemistry methods and discussed in \cite{Grimme17}. Here, \textit{relative} means compared with the Buckminster fullerene $C_{60,1812}$ which has the lowest DFT-energy at the $\text{PW6B95-D3}^{\text{ATM}}/\text{def2-QZVP}$ level (cf. \cite{Grimme17}), i.e. Buckminster fullerene has a relative energy of 0. In the sequel, we say that an isomer $C_{n,i}$ is \textit{energetically more stable} than $C_{n,j}$, $i\not= j$, if $C_{n,i}$ has a smaller energy than $C_{n,j}$.

According to \cite{Grimme17} the most stable isomer is $C_{60,1812}$ and the second stable one is $C_{60,1809}$. At the other end of the ranking the three least stable ones are $C_{60,2}$, $C_{60,3}$ and $C_{60,1}$. It has been assumed for a long time that an isomer is the more stable the less adjacent pentagon it has. Indeed, calculations of \cite{Grimme17} allow the conclusion that each pair of two adjacent pentagons leads to a increase in the relative energy of an isomer of about 20 to 25 kcal $\text{mol}^{-1}$. The amount of such pentagon pairs can be described with Fowler-Manolopoulos pentagon indices $p_i:=\# \{\mbox{pentagons which are adjacent to } i \mbox{ other pentagons} \}$, such that the sum of $p_1$ up to $p_5$ is equal to 12 for every fullerene, cf. \cite{FowlerManop}. Based on these values, the pentagon signature $P_1=1/2 \sum_{i=1}^5ip_i$ can be calculated, which quantifies the amount of connected pentagons. Clustering all $C_{60}$-isomers according to the pentagon signature, five isomers stand out, namely $C_{60,1812} (P_1=0)$, $C_{60,1809} (P_1=2)$, $C_{60,2} (P_1=16)$, $C_{60,3} (P_1=18)$ and $C_{60,1} (P_1=20)$. The signature can be easily read from Figures \ref{fig:C_60,1}-\ref{fig:C_60,1812}. Pentagons signatures of the remaining isomers lie between $2$ and $16$. For each of these values, at least two isomers exist with the same pentagon signature. By Lemma \ref{lemm:Newton} \ref{Lemma_1_b}) and \ref{lemma:edges_T6-T5} we get the following 
\begin{proposition}
For any $C_n$-isomer it holds
\begin{equation*}
P_1=\frac{N(A_n^6,2)}{2}-\frac{3n}{2}+60.
\end{equation*}
\end{proposition}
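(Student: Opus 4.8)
The plan is to rewrite both sides of the claimed identity as edge counts of the two facet dual graphs $T_n^5$ and $T_n^6$, and then to invoke Lemma~\ref{lemma:edges_T6-T5} to relate them. The whole argument is a short chain of substitutions once the combinatorial meaning of each quantity is made explicit.

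First I would reinterpret the pentagon signature $P_1$ as an edge count of the pentagon graph. In $T_n^5$ every vertex is a pentagonal facet, and its valency is by definition the number of pentagons adjacent to it; hence $p_i$ is exactly the number of vertices of $T_n^5$ of valency $i$, for $i=1,\dots,5$. By the handshaking lemma,
\[
\sum_{i=1}^5 i\,p_i=\sum_{v\in V(T_n^5)}\deg(v)=2\,|E(T_n^5)|,
\]
so that $P_1=\tfrac12\sum_{i=1}^5 i\,p_i=|E(T_n^5)|$. (As a sanity check, this gives $P_1=0$ precisely for the IPR-isomers, matching $C_{60,1812}$.)

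Next I would identify the leading term on the right-hand side. Since the $i$-th diagonal entry of $(A_n^6)^2$ counts the walks of length two from vertex $i$ to itself, i.e.\ the valency of $i$, Lemma~\ref{lemm:Newton}~\ref{Lemma_1_b}) yields
\[
N(A_n^6,2)=\tr\!\bigl((A_n^6)^2\bigr)=\sum_{v\in V(T_n^6)}\deg(v)=2\,|E(T_n^6)|,
\]
whence $\tfrac12\,N(A_n^6,2)=|E(T_n^6)|$. It then remains only to substitute Lemma~\ref{lemma:edges_T6-T5}, namely $|E(T_n^6)|=|E(T_n^5)|+\tfrac{3n}{2}-60$, into this expression:
\[
\frac{N(A_n^6,2)}{2}-\frac{3n}{2}+60=|E(T_n^6)|-\frac{3n}{2}+60=|E(T_n^5)|=P_1,
\]
which is exactly the assertion. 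The only step requiring any thought is the first one, recognizing that the pentagon signature coincides with the edge count of $T_n^5$; after that the identity is forced by Lemmas~\ref{lemm:Newton} and~\ref{lemma:edges_T6-T5}, so I do not anticipate any genuine obstacle.
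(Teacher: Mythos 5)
Your proof is correct and follows exactly the route the paper intends: the paper derives this proposition directly from Lemma~\ref{lemm:Newton}~\ref{Lemma_1_b}) (giving $N(A_n^6,2)=2|E(T_n^6)|$) and Lemma~\ref{lemma:edges_T6-T5}, with the identification $P_1=|E(T_n^5)|$ via the handshaking lemma left implicit. Your write-up simply makes that last step explicit, so there is nothing to add.
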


In Table \ref{tab:Histo_NP&PS}, $C_{60}$-isomers are listed in the same order given by their relative energy, by their pentagon signature and their Newton polynomial of degree 2.  

\begin{table}[H]    								
\centering
\begin{tabular}{ c || c | c | c | c | c }
Isomer & $C_{60,1812}$ & $C_{60,1809}$ & $C_{60,2}$ & $C_{60,3}$ & $C_{60,1}$ \\
\hline
$N(A^6_{60},2)$ & 60 & 64 & 92 &  96 & 100 \\
\hline
$P_1$ & 0 & 2 & 16 & 18 & 20 
\end{tabular}
\caption{Five $C_{60}$-isomers with unique Newton polynomial $N(A^6_{60},2)$ and Pentagon signature $P_1$ sorted by their relative energy in ascending order.}
\label{tab:Histo_NP&PS}
\end{table}

Indeed, one gets more information about a fullerene structure looking on hexagons than on pentagons. This becomes clear looking at fullerenes with large $n$. For example, $C_{80}$ has 7 IPR-isomers, so their pentagon structure and pentagon signature are the same. Nevertheless, only two of them have been produced in pure form, although DFT calculations have been done for all of them, see \cite{C80}. As a result of \cite{C80}, only two IPR--isomers can be claimed stable. Hence, all descriptors based on the pentagon structure do not properly predict stability.

In \cite{Grimme17} a \textit{good stability criterion} is defined as the one which can identify $C_{60,1812}$ and $C_{60,1809}$ as the most stable and $C_{60,2}, C_{60,3}$ and $C_{60,1}$ as the least stable isomers in the correct energetic order. Additionally, the Pearson coefficient $\rho$  of linear correlation between the relative energies of all  $C_{60}$--isomers and their criterion values should be larger than $0.6$. Finally, the slope and the Pearson correlation coefficient in the linear regression of relative energy vs. the criterion for $C_{60}$--isomers with $P_1\in\lbrace4,\ldots, 14 \rbrace$ should have the same sign.

As we have seen in Table \ref{tab:Histo_NP&PS}, Newton polynomials yield the correct order of the most and least stable $C_{60}$-isomers. Next, we perform a linear regression (using MATLAB curve fitting Toolbox) of $N(A_{60}^6, k)$ vs. relative energies of all $C_{60}$ isomers for all even $4\le k\le 12$. For the case $k=2$ Newton polynomial is equivalent to the 1st moment hexagon Signature $H_1$, which is listed in \cite[Table 3]{Grimme17} as a good stability criterion. Hence, $N(A_{60}^6,2)$ is a good stability criterion as well and can be neglected in further considerations. Table \ref{tab:C40C60C80} shows that Pearson correlation coefficient $\rho$  is much higher than $0.6$ for all considered $k$. For degrees $k=8,10,12$, Newton polynomials $N(A_{60}^6, k)$ get very large, and therefore we took a logarithmic scale. But even with linear scale, one gets correlation coefficients larger than $0.6$ in these three cases, compare Table \ref{tab:app_lin_reg}.  

Next we divided all isomers of $C_{60}$ into $18$ subsets $G_i=\lbrace P\in C_{60} \mid P_1(P)=i \rbrace$ according to their pentagon signature $i\in\lbrace 0,2,3,\ldots,16,18,20\rbrace$. Then we performed a linear regression of Newton polynomials of different degrees vs. relative energies of isomers in $G_i$ for every $i\notin\lbrace 0,2,3,15,16, 18, 20 \rbrace$ as it is required in \cite{Grimme17}. Our results are listed in Appendix, Table \ref{tab:app_linear_regre_2}. For $k\in\lbrace 4,6\rbrace$ and $i\in\lbrace4,5\rbrace$ we get Pearson correlation coefficients and slopes with a negative sign, unlike for all other combinations of $k$ and $i$. This can be explained by the fact that $G_4$ and $G_5$ do not contain many isomers. More precisely, $|G_4|=17,$ and $|G_5|=86$ holds. So, neglecting these two cases would yield that $N(A_{60}^6,k)$ with $k=4,6$ is a good stability criterion. 

For $k=10,12$ one gets positive slopes and Pearson correlation coefficients in all cases, and therefore Newton polynomials of degree 10 and 12, in particular of degree $k_{single}^*$, entirely fulfil all conditions of a good stability criterion.

\begin{table}[H]    								
\centering
\begin{tabular}{c | c | c }
Linear regression  &  $\rho$ &slope  \\ \hline
$N(A_{60}^6,4)\sim \text{relative energy}$ &  0.95 & 0.45 \\
$N(A_{60}^6,6)\sim \text{relative energy}$ &  0.95 & 0.02 \\
$log\left(N(A_{60}^6,8)\right)\sim \text{relative energy}$ &  0.945 & 106.6  \\
$log\left(N(A_{60}^6,10)\right)\sim \text{relative energy}$ &  0.94 & 81.36 \\
$log\left(N(A_{60}^6,12)\right)\sim \text{relative energy}$ &  0.94 & 66.1 
\end{tabular}
\caption{Pearson correlation coefficient $\rho$ and the slope of linear regression between Newton polynomials and the relative energy of all $C_{60}$--isomers given in \cite{Grimme17}.}
\label{tab:C40C60C80}
\end{table}

To check whether Newton polynomials can distinguish between IPR-isomers, i.e. yield their energetically correct order, we computed Newton polynomials of all 31924 $C_{80}$-isomers. Within the whole set of $C_{80}$ the seven IPR-isomers have the smallest Newton polynomials. But ordering the set of IPR-isomers according to Newton polynomials leads to the observation that the most stable IPR-isomers have the greatest Newton polynomials. These seven isomers are listed in Table \ref{tab:C80IPR}. So, it seems that with increasing $n$ Newton polynomials $N(A_{n}^6,k)$ for even $k\geq 2$ remain a good stability criterion. 

\begin{table}[H]    								
\centering
\begin{tabular}{c | c | c | c | c | c | c | c | c }
Isomer & rel. Energy  & tr$(A_{80,6}^4)$ & tr$(A_{80,6}^6)$ & tr$(A_{80,6}^8)$ & tr$(A_{80,6}^{10})$ & tr$(A_{80,6}^{12})$ & tr$(A_{80,6}^{14})$ & tr$(A_{80,6}^{16})$  \\ \hline
31918 & 0 &  1040 & 12960 & $19.4\times 10^{4}$ & $31.7\times 10^5$ & $5.4\times 10^7$ & $9.4\times 10^8$ & $1.7\times 10^{10}$\\
31919& 0.41   & 1016 & 12144 & $17.3\times 10^4$ & $26.9\times 10^5$ & $4.4\times 10^7$ & $7.2\times 10^8$ & $1.2\times 10^{10}$ \\
31920 & 2.58  & 960 & 10530 & $13.7\times 10^4$ & $19.4\times 10^5$ & $2.9\times 10^7$ & $4.5\times 10^8$ & $0.7\times 10^{10}$\\
31921 & 4.37  & 984 & 11442 & $16\times 10^4$ & $24.3\times 10^5$ & $3.9\times 10^7$ & $6.3\times 10^8$ & $1.1\times 10^{10}$ \\
31922 & 1.48  & 920 & 9732 & $12.2\times 10^4$ & $17\times 10^5$ & $2.5\times 10^7$ & $3.7\times 10^8$ & $0.6\times 10^{10}$ \\
31923 & 3.32 &  880 & $8940$ & $10.9\times 10^4$ & $14.8\times 10^5$ & $2.1\times 10^7$ & $3.1\times 10^8$ & $0.48\times 10^{10}$ \\
31924 & 14.31 & 840 & 8520 & $10.5\times 10^4$ & $14.4\times 10^5$ & $2.1\times 10^7$  & $3.1\times 10^8$ & $0.47\times 10^{10}$
\end{tabular}
\caption{Seven IPR-isomers of $C_{80}$, their relative energy in kcal/mol$^{-1}$  and Newton polynomials. Only the isomers 31918 and 31919 can be chemically separated so far, cf. \cite{C80}.}
\label{tab:C80IPR}
\end{table}

\subsection{Asymmetry coefficients of isomers of $\boldsymbol{C_{60}}$}
\label{subsec: Spectral properties of fullerenes} 
The Fowler asymmetry parameter is claimed to be a good stability criterion \cite{Grimme17}. Check whether the asymmetry coefficient $\theta$  defined in Section \ref{sec:Spectral} is a good stability criterion as well. The asymmetry coefficients of isomers shown in Figure \ref{fig:schlegel+dual} are $\theta_1=1, ~\theta_2=1.4,~\theta_3=1.2,~\theta_{1809}=0.8$ and $\theta_{1812}=0$. The histogram of $\theta$ of all $C_{60}$-isomers is shown in Figure \ref{fig:asy_coef}.
\begin{figure}[H]
    \centering
    \includegraphics[scale=0.2]{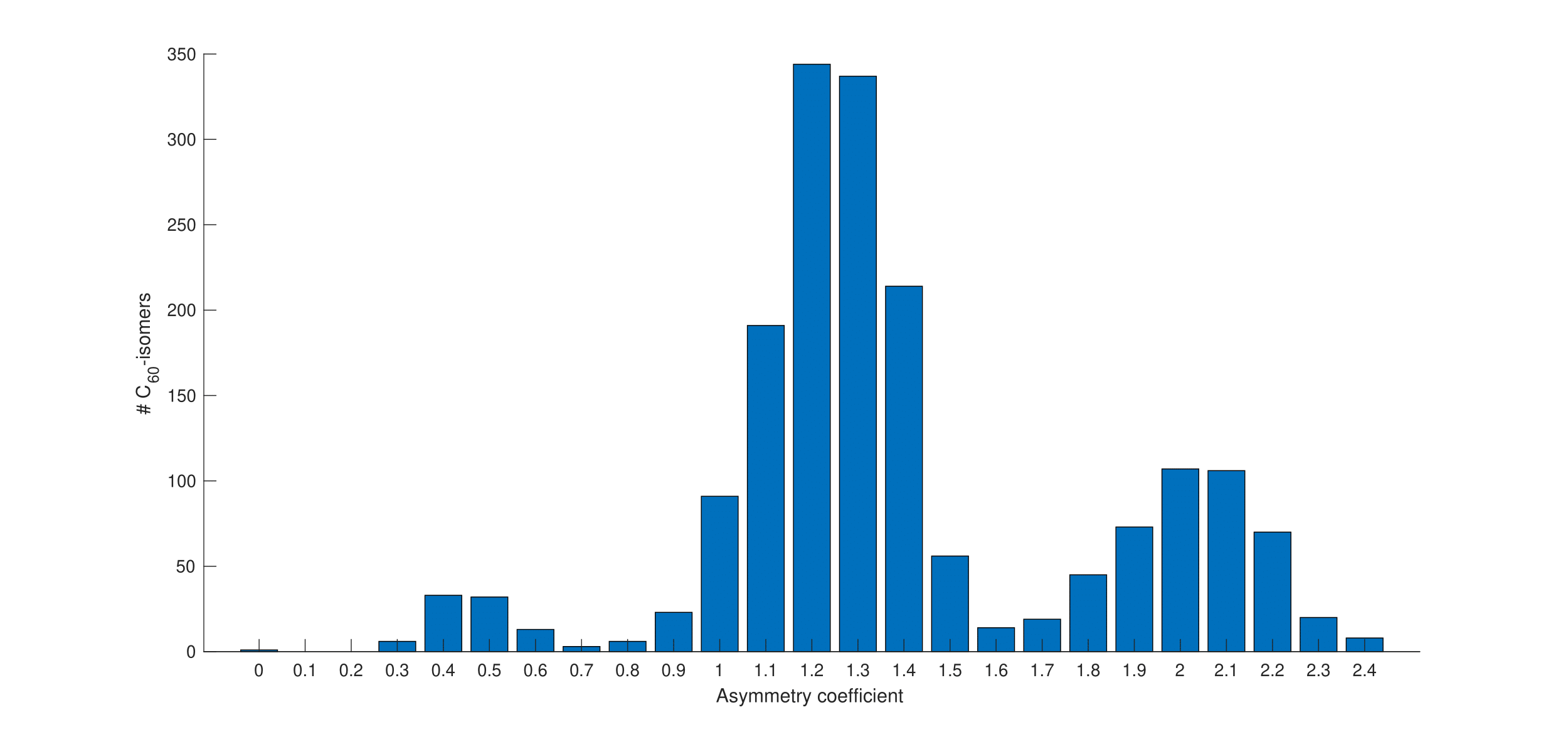}
    \caption{Histogram of 23 different asymmetry coefficients of $C_{60}$-isomers}
    \label{fig:asy_coef}
\end{figure}

It turns out that the asymmetry coefficient $\theta$ is not a good stability criterion since it does not preserve the energetic order required in \cite{Grimme17}. Thus, the asymmetry coefficient of eight isomers is equal 2.4, which is the largest value. These isomers are $C_{60,1334},C_{60,1554},C_{60,1676},$ $C_{60,1740},C_{60,1741},C_{60,1742},C_{60,1761}$ and $C_{60,1784}$.
Figure \ref{fig:asy_coef_example} shows $C_{60,1784}$, which has six hexagons with valency  two, twelve hexagons with valency four and two hexagons with valency six. So, the mean valency is 3.6, the maximal valency is 6 and the resulting asymmetry coefficient equals 2.4. 

\begin{figure}[H]
	\centering
	\includegraphics[scale=0.2]{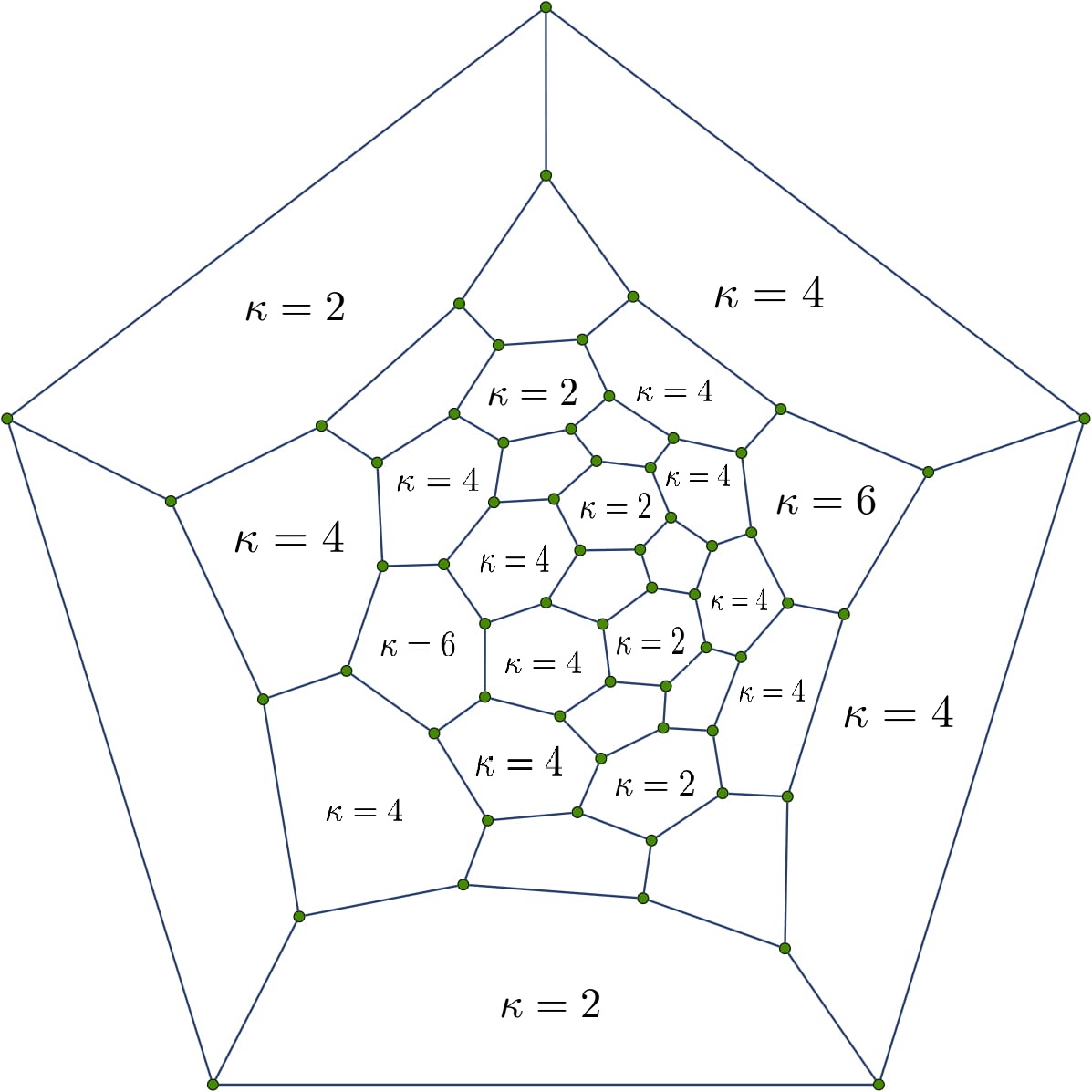}
	\caption{Schlegel diagram of one of the eight isomers with largest asymmetry coefficient $\theta=2.4$}
	\label{fig:asy_coef_example}
\end{figure} 

In Table \ref{tab:theta_relativeEnergy}, asymmetry coefficients and energetic order numbers  (1 = most stable, 1812 = least stable) \cite{Grimme17} are listed for some outstanding isomers of $C_{60}$, i.e. the ones with smallest and largest $\theta$ as well as those shown in Figure \ref{fig:schlegel+dual}. One can see that the three least stable as well as the two most stable isomers have the asymmetry coefficient less than $2.4$. In addition, the relative stability of the eight isomers with $\theta=2.4$ varies from 313 to 1122. This leads to the conclusion that $\theta$ is not a good stability predictor.   

\begin{table}[H]    								
	\centering
	\begin{tabular}{ r || c | c | c | c | c | c | c | c | c | c | c | c | c}
		$i$ & 1 & 2 & 3 & 1334 & 1554 & 1676 & 1740 & 1741 & 1742 & 1761 & 1784 & 1809 & 1812 \\
		\hline
		$\theta_i$ & 1 & 1.4 & 1.2 & 2.4 & 2.4 & 2.4 & 2.4 & 2.4 & 2.4 & 2.4 & 2.4 & 0.8 & 0 \\
		\hline
	 stability& 1812 & 1810 & 1811 & 313 & 472 & 326 & 576 & 555 & 367 & 727 & 1122 & 2 & 1 \\
	\end{tabular}
	\caption{Comparison of the asymmetry coefficient $\theta$ and energetic stability of some $C_{60}$--isomers}
	\label{tab:theta_relativeEnergy}
\end{table}

\section{Conclusion}
\label{sec: Conclusion}
We present an easy to compute functional of spectra of the graphs $T_{n}$ and $T_n^6$ which classifies all $C_{n}$-isomers. Thereby we focus on the structure of the dual graph of hexagonal facets of $C_n$ and its adjacency matrix $A_{n}^6$.\\ 
The spectra of the adjacency matrices are characteristic to combinatorial isomers described above. It becomes apparent that the Newton polynomial of degree $2,10,12(=k_{single}^*)$ of $T_{n}^6$ appears to be a good stability criterion. So, Newton polynomials of $T_{60}^6$ can be added to the list presented in \cite[Table 3]{Grimme17} as indices, which, depending on the degree, fulfil the criteria partly or entirely. We show that Newton polynomials generalize the Pentagon signature and better describe the fullerene structure. The interpretation of these Newton polynomials is very easy for $k\leq 3$, but gets demanding with increasing $k$. 
 
\section*{Acknowledgements}
We express our gratitude to Markus Schandar who was involved in programming of spectra of fullerenes at the early stage of this research. We would also like to thank Max von Delius and Konstantin Amsharov for discussions on the chemistry of fullerenes. We are indebted to Axel Gro\ss  \ for the reference \cite{Grimme17}  and his lectures on the DFT method.
 
\newpage
\bibliography{Literatur}{}
\bibliographystyle{abbrv}

\newpage
\section*{Appendix}


\subsection*{Matlab - Curve Fitting Toolbox - Linear Regression}

\begin{table}[H]
\centering
\begin{tabular}[t]{l | c | c | c }
Independent variable & $\rho$ & slope & intercept \\
\hline
$N(A_{60}^6,2)$ & 0.9524 & 11.6996  & -701.3801 \\
$N(A_{60}^6,4)$ & 0.9557 & 0.4501 & -96.2619 \\
$N(A_{60}^6,6)$ & 0.9514 & 0.0201 & 37.6084\\
$N(A_{60}^6,8)$ & 0.9328 & $9.135\cdot 10^{-4}$ & 91.6962\\
$N(A_{60}^6,10)$ & 0.8974 & $4.0357\cdot 10^{-5}$ & 122.1883\\
$N(A_{60}^6,12)$ & 0.8456 & $1.7054\cdot 10^{-6}$ & 142.6633 \\
$log\left(N(A_{60}^6,8)\right)$ & 0.9452 & 106.6054 & $-1.0359\cdot 10^{3}$\\
$log\left(N(A_{60}^6,10)\right)$ & 0.9421 & 81.3552 & -964.0464 \\
$log\left(N(A_{60}^6,12)\right)$ & 0.9392 & 66.1011 & -927.8387 \\
\end{tabular}\caption{Coefficients of linear regression \textit{Independent variable $\sim$ relative energy} for all $C_{60}$-isomers.}\label{tab:app_lin_reg}
\end{table}
\begin{table}[H]
\centering
\begin{tabular}[t]{l | c | c | c | c||}
Ind. var. & $i$  & $\rho$ & slope & intercept\\
\hline
$N(A_{60}^6,4)$ & $G_4$ & -0.4059  & -0.2933 & 217.0715\\
$N(A_{60}^6,4)$ & $G_5$ & -0.1603  & -0.0701 & 149.9463\\
$N(A_{60}^6,4)$ & $G_6$ & 0.2720  & 0.1371 & 68.5874\\
$N(A_{60}^6,4)$ & $G_7$ & 0.3833 & 0.2439 & 23.3254\\
$N(A_{60}^6,4)$ & $G_8$ & 0.4208 & 0.3077 & -5.3254\\
$N(A_{60}^6,4)$ & $G_9$ & 0.49 & 0.3954 & -582102\\
$N(A_{60}^6,4)$ & $G_{10}$ & 0.4059 & 0.3617 & -30.1859\\
$N(A_{60}^6,4)$ & $G_{11}$  & 0.3326 & 0.3337 & -5.548\\
$N(A_{60}^6,4)$ & $G_{12}$ & 0.2198 & 0.0989 & 195.1829\\
$N(A_{60}^6,4)$ & $G_{13}$ & 0.4858 & 0.2748 & 53.9504\\
$N(A_{60}^6,4)$ & $G_{14}$ & 0.3907 & 0.1841 & 155.4456\\
$N(A_{60}^6,6)$ & $G_4$ & -0.0529 & -0.0023 & 98.0643\\
$N(A_{60}^6,6)$ & $G_5$ & -0.0513 & -0.001 & 120.9228\\
$N(A_{60}^6,6)$ & $G_6$ & 0.3469 & 0.0072 & 102.8111\\
$N(A_{60}^6,6)$ & $G_7$ & 0.4957 & 0.0121 & 88.6426\\
$N(A_{60}^6,6)$ & $G_8$ & 0.4991 & 0.0129 & 94.0273\\
$N(A_{60}^6,6)$ & $G_9$ & 0.5638 & 0.0151 & 83.1361\\
$N(A_{60}^6,6)$ & $G_{10}$ & 0.4928 & 0.014 & 99.8652\\
$N(A_{60}^6,6)$ & $G_{11}$ & 0.4525 & 0.0129 & 117.1669\\
$N(A_{60}^6,6)$ & $G_{12}$ & 0.2435 & 0.0032 & 239.4308\\
$N(A_{60}^6,6)$ & $G_{13}$ & 0.4746 & 0.0075 & 196.7939\\
$N(A_{60}^6,6)$ & $G_{14}$ & 0.3256 & 0.0043 & 264.7982\\
$N(A_{60}^6,8)$ & $G_4$ & 0.138 & $3.7756\cdot 10^{-4}$  & 76.2233 \\
$N(A_{60}^6,8)$ & $G_5$ & -0.0016 & $-1.8288\cdot 10^{-6}$ & 116.4303\\
$N(A_{60}^6,8)$ & $G_6$ & 0.3664 & $3.973 \cdot 10^{-4}$ & 116.0842 \\
$N(A_{60}^6,8)$ & $G_7$ & 0.5148 & $6.2015 \cdot 10^{-4}$ & 114.8225 \\
$N(A_{60}^6,8)$ & $G_8$ & 0.495 & $5.8553\cdot 10^{-4}$ & 129.7291 \\
$N(A_{60}^6,8)$ & $G_9$ & 0.5565 & $6.5117\cdot 10^{-4}$ & 130.8349 \\
$N(A_{60}^6,8)$ & $G_{10}$ & 0.4894 & $5.7816 \cdot 10^{-4}$ & 148.9877 \\
$N(A_{60}^6,8)$ & $G_{11}$ & 0.4509 & $4.869 \cdot 10^{-4}$ & 171.0159 \\
$N(A_{60}^6,8)$ & $G_{12}$ & 0.219 & $1.0241 \cdot 10^{-4}$ & 256.646 \\
$N(A_{60}^6,8)$ & $G_{13}$ & 0.4498 & $2.4936 \cdot 10^{-4}$ & 237.326 \\
$N(A_{60}^6,8)$ & $G_{14}$ & 0.2674 & $1.2357 \cdot 10^{-4}$ & 294.4285 \\
\end{tabular}\begin{tabular}[t]{l | c | c | c | c}
Ind. var. & $i$ & $\rho$ & slope & intercept\\
\hline
$N(A_{60}^6,10)$ & $G_4$ & 0.2237 & $4.0346\cdot 10^{-5}$ & 74.2129 \\
$N(A_{60}^6,10)$ & $G_5$ & 0.0129 & $8.9429\cdot 10^{-7}$ & 115.8252 \\
$N(A_{60}^6,10)$ & $G_6$ & 0.3647 & $2.2609 \cdot 10^{-5}$ & 122.7497 \\
$N(A_{60}^6,10)$ & $G_7$ & 0.4987 & $3.2803\cdot 10^{-5}$ & 128.2062 \\
$N(A_{60}^6,10)$ & $G_8$ & 0.4598 & $2.7315\cdot 10^{-5}$ & 148.0877 \\
$N(A_{60}^6,10)$ & $G_9$ & 0.5183 & $2.9027\cdot 10^{-5}$ & 154.8938 \\
$N(A_{60}^6,10)$ & $G_{10}$ & 0.4507 & $2.4108 \cdot 10^{-5}$ & 175.3146 \\
$N(A_{60}^6,10)$ & $G_{11}$ & 0.4088 & $1.8534 \cdot 10^{-5}$ & 199.3648 \\
$N(A_{60}^6,10)$ & $G_{12}$ & 0.1805 & $3.249 \cdot 10^{-6}$ & 265.5572 \\
$N(A_{60}^6,10)$ & $G_{13}$ & 0.4187 & $8.8169\cdot 10^{-6}$ & 256.1029 \\
$N(A_{60}^6,10)$ & $G_{14}$ & 0.221 & $3.7874 \cdot 10^{-6}$ & 307.592 \\
$N(A_{60}^6,12)$ & $G_4$ & 0.2632 & $3.2603\cdot 10^{-6}$ & 75.4691 \\
$N(A_{60}^6,12)$ & $G_5$ & 0.0111 & $4.8793\cdot 10^{-8}$ & 115.988 \\
$N(A_{60}^6,12)$ & $G_6$ & 0.3552 & $1.3141\cdot 10^{-6}$ & 126.7007 \\
$N(A_{60}^6,12)$ & $G_7$ & 0.4688 & $1.7586\cdot 10^{-6}$ & 136.4829 \\
$N(A_{60}^6,12)$ & $G_8$ & 0.4143 & $1.2821 \cdot 10^{-6}$ & 159.4621 \\
$N(A_{60}^6,12)$ & $G_9$ & 0.4696 & $1.307\cdot 10^{-6}$ & 169.7628 \\
$N(A_{60}^6,12)$ & $G_{10}$ & 0.4011 & $1.0054\cdot 10^{-6}$ & 191.9619 \\
$N(A_{60}^6,12)$ & $G_{11}$ & 0.3571 & $7.0982\cdot 10^{-7}$ & 216.4457 \\
$N(A_{60}^6,12)$ & $G_{12}$ & 0.1443 & $1.0342\cdot 10^{-7}$ & 270.5091 \\
$N(A_{60}^6,12)$ & $G_{13}$ & 0.3907 & $3.2541\cdot 10^{-7}$ & 266.6058 \\
$N(A_{60}^6,12)$ & $G_{14}$ & 0.1884 & $1.2319\cdot 10^{-7}$ & 314.4527 \\
\end{tabular}\caption{Linear regression of an independent variable $N(A_{60}^6,k)$, $4\le k\le 12$ even, vs. relative energy over the subsets $G_i$ of $C_{60}$--isomers}\label{tab:app_linear_regre_2}
\end{table}

\end{document}